\documentclass[11pt,reqno]{amsart}
\usepackage[utf8]{inputenc}
\usepackage{amsmath,amssymb,amsthm,amsfonts}
\usepackage{mathrsfs}
\usepackage{amsfonts}
\usepackage{a4wide}
\newtheorem*{unnumberedthm}{Theorem A}
\allowdisplaybreaks \numberwithin{equation}{section}

\allowdisplaybreaks[1] 
\usepackage{fancyhdr} 
\allowdisplaybreaks
\numberwithin{equation}{section}

\pagestyle{fancy}

\fancyhf{}

\fancyhead[C]{Existence and multiplicity of  positive solutions  } 




\fancyhead[RO]{\thepage}

\fancyhead[LE]{\thepage}


\usepackage[numbers,sort&compress]{natbib}
\usepackage{color}
\usepackage[colorlinks=true]{hyperref}

\usepackage{hyperref}
\hypersetup{colorlinks=true,
}
\newtheorem{theorem}{Theorem}[section]

\newtheorem{corollary}[theorem]{Corollary}
\newtheorem{lemma}[theorem]{Lemma}

\theoremstyle{definition}

\newtheorem{remark}[theorem]{Remark}

\newcommand{\R}{\mathbb{R}}
\newcommand{\ds}{\displaystyle}

\newcommand{\myfontsize}{\fontsize{8pt}{11pt}\selectfont}

\begin{document}
	%
	%
	%
	%
	%
	%
	\thanks{The research is  supported by the NSFC (12061012). This paper has been submitted to JDE in Jun. 2024. }
	\title
	{Existence and multiplicity of positive solutions  to a critical elliptic  equation with logarithmic perturbation }
	\maketitle
	\begin{center}
		
		
		\author{Qihan He}
		\footnote{\myfontsize Email addresses:heqihan277@gxu.edu.cn;College of Mathematics and Information Science,\&
			Center for Applied Mathematics of Guangxi (Guangxi University), Guangxi University, Nanning, Guangxi,  530003, P. R. China
		}
		and
		\author{Yiqing Pan}
		\footnote{\myfontsize  Email addresses:13718049940@163.com;(1).College of Mathematics and Information Science, Guangxi University, Nanning, 530003, P. R. China;(2).College of Science, Beibu Gulf University, Qinzhou, 535011, P. R. China}
		
	\end{center}
	
	\begin{abstract} We consider the existence and multiplicity of  positive solutions for the following  critical problem with logarithmic term:
		\begin{equation*}
		\begin{cases}
		-\Delta u={\mu\left|u\right|}^{{2}^{\ast }-2}u+\nu |u|^{q-2}u+\lambda u+\theta u\log {u}^{2}, & \text{ }x\in{\Omega },\\
		\quad \;\:\, u=0,&\text{ } x\in{\partial \Omega },
		\end{cases}
		\end{equation*}
		where $\Omega$ $\subset$ $\R^N$ is a bounded smooth domain, $ \nu, \lambda\in \R$, $\mu>0, \theta<0$, $N\ge3$,
		${2}^{\ast }=\frac{2N}{N-2}$ is the critical  Sobolev exponent for the embedding $H^1_{0}(\Omega)\hookrightarrow L^{2^\ast}(\Omega)$ and $q\in (2, 2^*)$, and which can be seen as a Br$\acute{e}$zis-Nirenberg problem. Under some assumptions on the $\mu, \nu, \lambda, \theta$ and $q$,  we will prove that the above problem has at least two positive solutions: One is the least energy solution, and the other one is the Mountain pass solution.  As far as we know, the existing results on the existence of  positive solutions to a  Br\'ezis-Nirenberg problem are to find a positive solution,  and no one has  given the existence of
		at least two positive solutions on it. So our results is totally new on this aspect.
	\end{abstract}
	\textbf{Keywords:}  Critical problem, Positive solution, Logarithmic term, Multiplicity.
	\section{Introduction}
	
	\indent In this paper, we investigate the existence and  multiplicity  of positive solutions for the critical problem with logarithmic term:
	\begin{equation}\label{1.1}
	\begin{cases}
	-\Delta u={\mu\left|u\right|}^{{2}^{\ast }-2}u+\nu |u|^{q-2}u+\lambda u+\theta u\log {u}^{2}, & \text{ }x\in{\Omega },\\
	\quad \;\:\, u=0,&\text{ } x\in{\partial \Omega },
	\end{cases}
	\end{equation}
	where $\Omega$ $\subset$ $\R^N$ is a bounded smooth domain, $ \nu, \lambda, \theta\in \R$, $\mu>0$, $N\ge3$,
	${2}^{\ast }=\frac{2N}{N-2}$ is the critical Sobolev exponent for the embedding $H^1_{0}(\Omega)\hookrightarrow L^{2^\ast}(\Omega)$ and $H^1_{0}(\Omega)$ is the closure of $C^\infty_0(\Omega)$ under the norm $\left\|u\right\|:=(\int_\Omega |\nabla u|^2)^\frac{1}{2}$.

	Our motivation for studying \eqref{1.1} comes from  that it originates from  some variational problems in geometry and physics(see \cite{Yamabe,Aubin,bre1,tau1,tau2,uhl,BahriA}),  where lack of compactness also occurs. The most notorious example is Yamabe's problem: find a function $u$ satisfying
	$$
	\begin{cases}
	-4\frac{N-1}{N-2}\Delta u=R^\prime{\left|u\right|}^{{2}^{\ast }-2}u-R(x) u & \text{ }~\hbox{on}~{M },\\
	\quad \;\:\,\quad \;\:\,\;\:\, u>0&\text{ }~\hbox{on}~{M },
	\end{cases}
	$$
	where  $R^\prime$ is some constant, $M$ is  an $N-$dimensional Riemannian manifold, $\Delta$ denotes the Laplacian and $R(x)$ represents the scalar curvature.  As we know,  the presence of the  critical term $|u|^{2^*-2} u$ brings some difficulties to the problem and makes this problem much more interesting.  On the other hand, the problem \eqref{1.1} is also related to the following time-dependent Schr\"{o}dinger equation  
	\begin{equation}\label{e1.2}
	-i\partial_t \Phi=\Delta\Phi+\alpha\Phi\log|\Phi|^2+F(|\Phi|^2)\Phi,   \quad (t,x)\in \R^+\times \Omega \subset \R^+\times\mathbb{R}^N,
	\end{equation}
	where $\alpha\in\R$,$2<p\le 2^*$ and $F$ is a given function, and which  plays a very important role in  nuclear physics, transport and diffusion phenomena, and  quantum mechanics. One can refer  to   \cite{Alfaro,Caze,Shuai,Troy,Wang,Zlosh,Peng} and the references therein for more information on this issue.
	The presence of logarithmic  terms in partial differential equations  has attracted  many interest in recent years, since the  logarithmic  term is sign-changing in $\R^+$ and sub-linear growth near $0$, which makes this problem much more interesting.
	
	
	
	\indent When $\nu=\lambda=\theta=0$ and $\mu=1$, the equation \eqref{1.1} turns into the following equation
	\begin{equation}\label{poh}
	\begin{cases}
	-\Delta u={\left|u\right|}^{{2}^{\ast }-2}u,& \text{ }x\in{\Omega },\\
	\quad \;\:\, u=0,&\text{ } x\in{\partial \Omega }.
	\end{cases}
	\end{equation}
	As we know, the solvability of this problem \eqref{poh} depends heavily on the geometry and topology of the domain $\Omega$. Pohozaev \cite{poh} showed  the first result to problem \eqref{poh}: If the bounded domain $\Omega$ is star-shaped,
	then problem \eqref{poh} has no nontrivial solutions. Some other results about \eqref{poh} can be seen in \cite{Kazdan,Coron,Bahri,Passaseo1,Passaseo2} and the references therein. However, as Br\'{e}zis and Nirenberg alleged in \cite{bre2}, a lower-order terms, such as $\lambda u$, also can reverse this circumstance.
	Br\'{e}zis and Nirenberg \cite{bre2} considered the following classical problem
	\begin{equation}\label{1.3}
	\begin{cases}
	-\Delta u={\left|u\right|}^{{2}^{\ast }-2}u+\lambda u,& \text{ }x\in{\Omega },\\
	\quad \;\:\, u=0,&\text{ } x\in{\partial \Omega },
	\end{cases}
	\end{equation}
	and asserted that the existence of a solution depends heavily on the parameter  $\lambda$ and the dimension $N$. They  showed  that:
	$(i)$ when $N \ge 4$ and  $\lambda\in \left(0,\lambda_{1}(\Omega)\right)$, there exists a positive sulution for  \eqref{1.3};
	$(ii)$ when $N=3$ and $\Omega$ is a ball, problem  \eqref{1.3} has a positive solution if and only if  $\lambda\in \left(\frac{1}{4}\lambda_{1}(\Omega),\lambda_{1}(\Omega)\right)$;
	$(iii)$~problem  \eqref{1.3} has no solutions when $\lambda<0$ and $\Omega$ is star-shaped, where $\lambda_1(\Omega)$ denotes the first eigenvalue of $-\Delta $ with zero Dirichlet boundary value.
	In \cite{Deng}, Deng et al. investigated the existence and non-existence of positive solutions for the following  problem with critical exponent and logarithmic perturbation
	\begin{equation}\label{Dend1.1}
	\begin{cases}
	-\Delta u={\left|u\right|}^{{2}^{\ast }-2}u+\lambda u+\theta u\log {u}^{2}, & \text{ }x\in{\Omega },\\
	\quad \;\:\,  u=0,&\text{ } x\in{\partial \Omega },
	\end{cases}
	\end{equation}
	which corresponds to \eqref{1.1} with $\mu=1$ and $\nu=0$ and  where the logarithmic term  $u\log {u}^{2}$ brings us two interesting  facts:\,\: One is that
	compared with $|u|^{2^*-2}u$, $u\log u^2$ is also  a lower-order term at infinity, and the other one is that $u\log {u}^{2}$ is sub-linear growth near $0$ and sign-changing in $(0, +\infty)$.
	They proved that problem \eqref{Dend1.1} admits a positive ground state solution, which is also a Mountain pass type solution when $N \ge4, \lambda\in\R$ and $\theta>0$. For the case of $\theta<0$, they only got a positive solution for $(\lambda, \theta )\in B_0\cup C_0$,  and $N=3$, or $N=4$ and $\frac{32 e^\frac{\lambda}{\theta }}{\rho_{max}^2}< 1$,
	where $B_0:=\{(\lambda, \theta )|\lambda \in [0,\lambda_1(\Omega)),  \theta <0,\frac{1}{N}(\frac{\lambda_1(\Omega)-\lambda}{\lambda_1(\Omega)})^\frac{N}{2}S^\frac{N}{2}+\frac{\theta }{2}|\Omega|>0\},$
	$C_0:=\{ (\lambda, \theta )|\lambda \in \R,  \theta <0, \frac{1}{N}S^\frac{N}{2}+\frac{\theta }{2}e^{-\frac{\lambda}{\theta  }}|\Omega|>0\}$
	and $\rho_{max}:=\sup\{r\in (0, +\infty): B(0,r)\subset \Omega\}.$ Besides, they also gave some non-existence  results   under some suitable assumptions on $\lambda$ and $\theta $.  The readers can refer to \cite{Deng} for more details. We want to emphasize the following two facts: (1) When  $(\lambda, \theta )\in B_0\cup C_0$, they have found that the ground state  energy is smaller than the Mountain pass energy; (2) Even though they  mainly used the Mountain pass theorem to show the existence of positive solution, they didn't  know whether the solution, they obtained,  is a Mountain pass solution  or not, and left it as an open problem.  Liu and Zou \cite{liut} showed  the existence of sign-changing solution with exactly two nodal domain to \eqref{Dend1.1} for $ \lambda \in \mathbb{R}, \theta>0$ and any smooth bounded domain $\Omega$. Particularly,  when $\Omega=B(0,R)$ is a ball, they  constructed infinitely many radial sign-changing solutions with alternating signs and prescribed nodal characteristic.
	Recently, Zou and his collaborators \cite{hlz,hlsz} considered the problem \eqref{1.1} with $\nu=0$, i.e,
	\begin{equation}\label{zou1.2}
	\begin{cases}
	-\Delta u=\mu{\left|u\right|}^{{2}^{\ast }-2}u+\lambda u+\theta u\log {u}^{2}, & \text{ }x\in{\Omega },\\
	\quad \;\:\,  u=0,&\text{ } x\in{\partial \Omega },
	\end{cases}
	\end{equation}
	which is more general than \eqref{Dend1.1} and showed the following results:
	\begin{unnumberedthm}\label{thmA}
		
		(1)~~If $N\geq 4$ and $(\lambda,\mu,\theta)\in M_1\cup M_2$, then problem \eqref{zou1.2} has a positive local minimum solution $\bar{u}$  and a positive ground state  solution $\tilde{u}$ such  that $J(\bar{u})=\tilde{c}_\rho<0$ and $J(\tilde{u})=\tilde{c}_g<0$, where
		\begin{equation}\label{m1}
		M_1:=\{(\lambda, \mu, \theta)|\lambda \in [0,\lambda_1(\Omega)),  \mu>0,\theta<0,\frac{\mu}{N}(\frac{\lambda_1(\Omega)-\lambda}{\mu\lambda_1(\Omega)})^\frac{N}{2}S^\frac{N}{2}+\frac{\theta}{2}|\Omega|>0\},
		\end{equation}
		\begin{equation}\label{m2}
		M_2:=\{ (\lambda, \mu, \theta)|\lambda \in \R,  \mu>0,\theta<0, \frac{1}{N}\frac{1}{{\mu}^\frac{N-2}{2}}S^\frac{N}{2}+\frac{\theta}{2}e^{-\frac{\lambda}{\theta }}|\Omega|>0\},
		\end{equation}
		$$ J(u)=\frac{1}{2}\int_{\Omega}\left|\nabla u\right|^{2}-\frac{\mu}{2^\ast}\int\left|u_+\right|^{2^\ast}
		-\frac{\theta}{2}\int{(u_{+})^2}(\log (u_{+})^2+\frac{\lambda}{\theta}-1),$$
		$$\tilde{c}_g:=\inf\limits_{u\in \{v\in H^1_0(\Omega):J^\prime(v)=0\}}J(u),$$
		$$\tilde{c}_\rho:=\inf\limits_{|\nabla u|_2<\rho}J(u)$$
		and $\rho$ is a suitable constant.
		
		(2) If $N\geq 4$, $(\lambda,\mu,\theta)\in M_1\cup M_2$ and $|\theta|e^{\frac{N}{2}-\frac{\lambda}{\theta}-1}<\rho^2$, then $\tilde{c}_\rho=\tilde{c}_g$, which implies that the solution $\bar{u}$  is also a positive ground state solution.
	\end{unnumberedthm}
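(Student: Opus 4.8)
The plan is to argue variationally with the functional $J$, using that $\theta<0$ turns the logarithmic term into a concave-type perturbation near the origin while leaving it lower-order at infinity. First I would collect the analytic preliminaries: $J\in C^{1}(H^{1}_{0}(\Omega),\mathbb{R})$; the elementary inequalities $s^{2}|\log s^{2}|\le C_{\delta}(s^{2-\delta}+s^{2+\delta})$ and $s^{2}\log s^{2}\le C_{\varepsilon}s^{2}+\varepsilon s^{2^\ast}$ for $s\ge 0$, which make the logarithmic terms and their derivative weakly continuous on $H^{1}_{0}(\Omega)$ and let their mildly super-quadratic part be absorbed into the critical term; the expansion $J(tu)=-\theta t^{2}(\log t)\,|u_{+}|_{2}^{2}+O(t^{2})+O(t^{2^\ast})$ as $t\to 0^{+}$, which is strictly negative for small $t>0$ (so $0$ is never a local minimum) while $J(tu)\to-\infty$ along rays; and the identity $J(u)-\tfrac12 J'(u)u=\tfrac{\mu}{N}|u_{+}|_{2^\ast}^{2^\ast}+\tfrac{\theta}{2}|u_{+}|_{2}^{2}$, whose right-hand side is bounded below by H\"older's inequality.

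The second step is to recognize that $M_{1}\cup M_{2}$ is precisely the condition under which $J$ has mountain-pass geometry. Since $-\tfrac{\theta}{2}=\tfrac{|\theta|}{2}>0$, completing the logarithmic density one way (using $\min_{s\ge0}s^{2}(\log s^{2}+\tfrac{\lambda}{\theta}-1)=-e^{-\lambda/\theta}$) gives $J(u)\ge \tfrac12\|u\|^{2}-\tfrac{\mu}{2^\ast}S^{-2^\ast/2}\|u\|^{2^\ast}-\tfrac{|\theta|}{2}e^{-\lambda/\theta}|\Omega|$, whose maximum over the sphere $\|u\|=r$ equals $c^{\ast}-\tfrac{|\theta|}{2}e^{-\lambda/\theta}|\Omega|$ with $c^{\ast}:=\tfrac1N\mu^{-(N-2)/2}S^{N/2}$, positive exactly under $M_{2}$. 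For $\lambda\in[0,\lambda_{1}(\Omega))$, keeping the linear term and using Poincar\'e together with $\min_{s\ge0}s^{2}(\log s^{2}-1)=-1$ gives $J(u)\ge \tfrac12\bigl(1-\tfrac{\lambda}{\lambda_{1}(\Omega)}\bigr)\|u\|^{2}-\tfrac{\mu}{2^\ast}S^{-2^\ast/2}\|u\|^{2^\ast}-\tfrac{|\theta|}{2}|\Omega|$, whose maximum over spheres is $\tfrac1N\bigl(\tfrac{\lambda_{1}(\Omega)-\lambda}{\lambda_{1}(\Omega)}\bigr)^{N/2}\mu^{-(N-2)/2}S^{N/2}-\tfrac{|\theta|}{2}|\Omega|$, positive exactly under $M_{1}$. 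Hence under $M_{1}\cup M_{2}$ there is a radius $r^{\ast}$, automatically smaller than the positive root $t_{0}$ of $\tfrac12 t^{2}=\tfrac{\mu}{2^\ast}S^{-2^\ast/2}t^{2^\ast}$, with $\inf_{\|u\|=r^{\ast}}J>0$, while $\inf_{\|u\|<r^{\ast}}J<0$.

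For the local minimum I would set $\rho=r^{\ast}$ and minimize $J$ over $\{\|u\|\le\rho\}$; the infimum equals $\tilde c_{\rho}<0$, and for a minimizing sequence $u_{n}\rightharpoonup\bar u$ the Br\'ezis--Lieb splitting together with the Sobolev inequality yields $J(u_{n})\ge J(\bar u)+\tfrac12\|u_{n}-\bar u\|^{2}-\tfrac{\mu}{2^\ast}S^{-2^\ast/2}\|u_{n}-\bar u\|^{2^\ast}+o(1)$; since $\|u_{n}-\bar u\|^{2}=\|u_{n}\|^{2}-\|\bar u\|^{2}+o(1)\le\rho^{2}$ and $\rho<t_{0}$, the last contribution is $\ge 0$, so $u_{n}\to\bar u$ strongly, $J(\bar u)=\tilde c_{\rho}$, and necessarily $\|\bar u\|<\rho$ (otherwise $J(\bar u)>0$), so $\bar u$ is a free critical point; testing $J'(\bar u)=0$ against $\bar u_{-}$ gives $\bar u\ge0$, and since $-\Delta\bar u=\bar u\,(\mu\bar u^{2^\ast-2}+\lambda+\theta\log\bar u^{2})$ with the bracket $\to+\infty$ as $\bar u\to0^{+}$, $\bar u$ is superharmonic near its zero set, so the strong maximum principle gives $\bar u>0$. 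For the ground state, $\tilde c_{g}\le J(\bar u)<0$ and $\tilde c_{g}>-\infty$ by the identity above; taking critical points $u_{k}$ with $J(u_{k})\to\tilde c_{g}$ gives a Palais--Smale sequence at level $\tilde c_{g}$, bounded in $H^{1}_{0}(\Omega)$ (one first bounds $|u_{k+}|_{2^\ast}$ from $J(u_{k})=\tfrac{\mu}{N}|u_{k+}|_{2^\ast}^{2^\ast}+\tfrac{\theta}{2}|u_{k+}|_{2}^{2}$ and H\"older, then bootstraps to $\|u_{k}\|$, absorbing the log via the second inequality of Step~1); since $\tilde c_{g}<\tilde c_{g}+c^{\ast}$ lies strictly below the first non-compactness level (Struwe's decomposition: any failure of compactness would force $\tilde c_{g}=J(u_{0})+\sum(\text{bubbles})$ with $J(u_{0})\ge\tilde c_{g}$ and each bubble $\ge c^{\ast}>0$), a subsequence converges strongly to $\tilde u$, and $J(\tilde u)=\tilde c_{g}<0=J(0)$ forces $\tilde u\ne0$, with $\tilde u>0$ as before.

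Finally, for part~(2) one has $\tilde c_{g}\le\tilde c_{\rho}$ always; for the reverse I would show $\|\tilde u\|<\rho$, so that $J(\tilde u)\ge\inf_{\|u\|<\rho}J=\tilde c_{\rho}$. Eliminating $|\tilde u|_{2^\ast}^{2^\ast}$ between $J'(\tilde u)\tilde u=0$ and $J(\tilde u)=\tfrac{\mu}{N}|\tilde u|_{2^\ast}^{2^\ast}+\tfrac{\theta}{2}|\tilde u|_{2}^{2}=\tilde c_{g}<0$ yields $\|\tilde u\|^{2}=N\tilde c_{g}+|\theta|\int_{\Omega}\tilde u^{2}\bigl(\tfrac{N}{2}-\log\tilde u^{2}-\tfrac{\lambda}{\theta}\bigr)<|\theta|\max_{s\ge0}\bigl[s^{2}\bigl(\tfrac{N}{2}-\log s^{2}-\tfrac{\lambda}{\theta}\bigr)\bigr]\,|\Omega|=|\theta|e^{N/2-\lambda/\theta-1}|\Omega|$ (the pointwise maximum being attained at $s^{2}=e^{N/2-\lambda/\theta-1}$), which is $<\rho^{2}$ under the smallness hypothesis of the statement; hence $\tilde c_{\rho}=\tilde c_{g}$ and the minimizer $\bar u$ of $J$ over the ball is itself a ground state. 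I expect the two delicate points to be (a) pinning down that $M_{1}\cup M_{2}$ is sharp for the mountain-pass geometry, i.e.\ the exact computation of the two sphere-maxima with every constant tracked, and (b) the boundedness of Palais--Smale sequences, since $s^{2}|\log s^{2}|$ is slightly super-quadratic and must be dominated through the critical term together with the Nehari/Pohozaev identities; with those in hand the concentration-compactness below $c^{\ast}$ is routine for $N\ge4$.
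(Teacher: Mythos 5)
Your part (1) is essentially the paper's own route (the paper proves the $\nu$-generalization, Theorem \ref{buth1}, in Section 2, which contains Theorem A(1) as the case $\nu=0$): the sphere-maxima you compute do reproduce $M_1$ and $M_2$ exactly as in Lemma \ref{Lemma1} (Case 1, via Deng et al.), the ball minimization with Br\'ezis--Lieb splitting and $\rho$ strictly below the critical radius gives the local minimizer, and the ground state is obtained from a bounded sequence of critical points whose weak limit is again critical, with the splitting $\int|\nabla\omega_n|^2=\mu|\omega_n^+|_{2^\ast}^{2^\ast}+o_n(1)$ and the energy gain $\frac1N\int|\nabla\omega_n|^2$. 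One remark: you do not need the full Struwe decomposition nor any strict level bound $\tilde c_g<\tilde c_g+c^\ast$ here; since the weak limit $\tilde u$ is itself a critical point, $J(\tilde u)\ge\tilde c_g$, and the nonnegative bubble contribution must then vanish --- this is precisely the two-line argument the paper uses, and it sidesteps the (justifiable but unproved in your sketch) Br\'ezis--Lieb-type splitting for the logarithmic term inside a global compactness theorem.

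For part (2) your route is genuinely different from the paper's, and it has one bookkeeping gap. Eliminating $|\tilde u|_{2^\ast}^{2^\ast}$ between $J'(\tilde u)\tilde u=0$ and $J(\tilde u)=\frac{\mu}{N}|\tilde u|_{2^\ast}^{2^\ast}+\frac{\theta}{2}|\tilde u|_2^2=\tilde c_g<0$ indeed gives $\|\tilde u\|^2<|\theta|\,e^{\frac N2-\frac\lambda\theta-1}\,|\Omega|$, with the extra factor $|\Omega|$ coming from integrating the pointwise maximum of $s^2(\frac N2-\log s^2-\frac\lambda\theta)$; under the hypothesis exactly as quoted, $|\theta|e^{\frac N2-\frac\lambda\theta-1}<\rho^2$, your final inequality $\|\tilde u\|^2<\rho^2$ only follows when $|\Omega|\le1$, so you must either carry the $|\Omega|$ through (i.e.\ read the smallness condition with that factor) or adjust the constant. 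More importantly, the present paper proves the stronger statement $c_\rho=c_\eta$ (Theorem \ref{buth3}) with no smallness hypothesis at all, by a different device: Lemma \ref{lmp3} shows the fiber map $g(t)=I_\nu(t u_1)$ along the ground state has at most two critical points; since $g(t)<0$ for small $t$, $g\to-\infty$, and $I_\nu\ge\alpha>0$ on the sphere $\|u\|=\rho$, it has exactly two, the first one is $t_1=1$ with $g<0$ on $(0,1]$, and hence the ray $\{tu_1:0<t\le1\}$ can never reach the sphere, forcing $\|u_1\|<\rho$ and $c_\eta\ge c_\rho$. Your elimination argument buys an explicit quantitative bound on $\|\tilde u\|$ (and is presumably the argument behind the cited condition in Theorem A(2)), while the fiber-map argument removes the hypothesis entirely; aside from the $|\Omega|$ issue, your proposal proves the statement as posed.
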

	Besides, we want to  mention that Han and his collaborators \cite{lhw, zhw} generalized the results of \cite{Deng,hlz,hlsz} to the bi-harmonic critical problem with  logarithmic perturbation.
	In \cite{Deng}, that the Nehari manifold is a natural manifold can not be proved, which causes that   we can not find a positive ground state solution to \eqref{Dend1.1} for the case of $\theta<0$. Thus we think that Zou and his collaborators gave an important method to find a positive ground state solution for some problems, whose Nehari manifold is not  a natural manifold.
	Since problem \eqref{Dend1.1} is a special case of \eqref{zou1.2}, it follows  from the results of \cite{hlz,hlsz}  that problem \eqref{Dend1.1} has a positive ground state solution for $N\geq 4$ and $(\lambda, \theta )\in B_0\cup C_0$. On the other hand, when $(\lambda,\mu,\theta)\in M_1\cup M_2$, $J(u)$ has   Mountain pass geometry structure, which leads to  a natural question whether problem \eqref{zou1.2} has a   Mountain pass solution. For this question,   Zou and his collaborators \cite{hlsz}   proposed a
	conjecture: When $N=4$ and $(\lambda,\mu,\theta)\in M_1\cup M_2$,  problem \eqref{zou1.2} possesses  a positive Mountain pass solution at level $\tilde{c}_M>0$, where
	$$\tilde{c}_M:=\inf\limits_{\gamma\in \tilde{\Gamma} }\sup\limits_{t\in [0,1]}J(\gamma(t)),$$
	$$\tilde{\Gamma}:=\{\gamma\in C([0,1], H^1_0(\Omega)):\gamma(0)=\bar{u}, J(\gamma(1))<J(\bar{u})\},$$
	and $\bar{u}$ is the local minimum solution given by Theorem A.
	
	Inspired by the above results, we want to study the existence and multiplicity of positive solutions to \eqref{1.1} and give an affirmative answer to the  conjecture, proposed by Zou and his collaborators in \cite{hlsz}.

	To find the positive solution of \eqref{1.1}, we define a functional as bellows:
	\begin{equation}\label{fun1}
	I_\nu(u)=\frac{1}{2}\int_{\Omega}\left|\nabla u\right|^{2}-\frac{\mu}{2^\ast}\int\left|u_+\right|^{2^\ast}-\frac{\nu}{q}\int\left|u_+\right|^{q}
	-\frac{\lambda}{2}\int |u_+|^2-\frac{\theta}{2}\int{(u_{+})^2}(\log (u_{+})^2-1),~u\in H^1_0(\Omega),
	\end{equation}
	
	or
	\begin{equation}\label{fun2}
	I_\nu(u)=\frac{1}{2}\int_{\Omega}\left|\nabla u\right|^{2}-\frac{\mu}{2^\ast}\int\left|u_+\right|^{2^\ast}-\frac{\nu}{q}\int\left|u_+\right|^{q}
	-\frac{\theta}{2}\int{(u_{+})^2}(\log (u_{+})^2+\frac{\lambda}{\theta}-1),~u\in H^1_0(\Omega),
	\end{equation}%
	where $u_{+}=max\{u,0\},\,u_{-}=-max\{-u,0\}$. It is easy to see that $I_\nu(u)$ is well-defined in $H_0^1(\Omega)$ and every nonnegative critical point of $I_\nu(u)$ corresponds to a solution of \eqref{1.1}.
	
	Before stating our results, we introduce some notations.
	Hereafter, we use $\int$ to denote $\int_\Omega~\mathrm{d}x$, unless specifically stated,    and let $S$ and $\lambda_1(\Omega)$ be the best Sobolev constant of the embedding  $H^1(\R^N) \hookrightarrow L^{2^*}(\R^N)$ and the first eigenvalue of $-\Delta $ with zero Dirichlet boundary value, respectively,  i.e,
	$$S:=\inf\limits_{u\in H^1(\R^N)\setminus\{0\}}\frac{\int_{\R^N}|\nabla u|^2~\mathrm{d}x}{(\int_{\R^N}|u|^{2^*}~\mathrm{d}x)^\frac{2}{2^*}}$$
	and
	$$\lambda_1(\Omega):=\inf\limits_{u\in H^1_0(\Omega)\setminus\{0\}}\frac{\int_{\Omega}|\nabla u|^2~\mathrm{d}x}{\int_{\Omega}|u|^2~\mathrm{d}x}.$$
	
	Let
	\begin{equation}\label{m3}
	M_3:=\{ (\lambda, \mu, \theta)|\lambda \in [0,\lambda_1(\Omega)),  \mu>0,\theta<0, \frac{1}{N}(\frac{\lambda_1(\Omega)-\lambda}{\lambda_1(\Omega)})^\frac{N}{2}(\frac{q}{q\mu+2^*\nu})^\frac{N-2}{2}S^\frac{N}{2}+\frac{\theta }{2}e^{-\frac{2\nu}{q\theta}}|\Omega|>0\},
	\end{equation}
	and
	\begin{equation}\label{m4}
	M_4:=\{ (\lambda, \mu, \theta)|\lambda \in \R,  \mu>0,\theta<0, \frac{1}{N}(\frac{q}{q\mu+2^*\nu})^\frac{N-2}{2}S^\frac{N}{2}+\frac{\theta }{2}e^{-\frac{2\nu}{q\theta}-\frac{\lambda}{\theta}}|\Omega|>0\}.
	\end{equation}

	At the same time, we set
	$$\left\|v\right\|^2:=\int|\nabla v|^2,\,~~v\in {H}_{0}^{1}(\Omega),$$
	\begin{equation}\label{defc1}
	c_{\rho}:=\inf\limits_{u\in A}I_\nu(u),
	\end{equation}
	and
	\begin{equation}\label{defc2}
	c_\eta:=\inf\limits_{u\in \eta}I_\nu(u),
	\end{equation}
	where $A:=\{||u||_2<\rho\}$,  $\rho $ will be  given in  Lemma \ref{Lemma1} and $
	\eta:=\{u\in H_0^1(\Omega):I_\nu'(u)=0\}. $

	Our results can be stated as below:

	\begin{theorem}\label{buth1}
		Assume that $N\geq 3$ and  $q\in (2, 2^*)$. If $\nu\leq 0$ and  $(\lambda,\mu,\theta)\in M_1\cup M_2$ or $\nu>0$ and $(\lambda,\mu,\theta)\in  M_3\cup M_4$,
		then problem \eqref{1.1} has a positive local minimum solution $u_0\in A$ and a positive ground state solution $u_1\in \eta$ such that $I_\nu(u_0)=c_{\rho}<0 $
		and $I_\nu(u_1)=c_\eta<0,$ where $M_1, M_2, M_3, M_4$ and $c_\rho, c_\eta$ are defined in \eqref{m1}, \eqref{m2} and \eqref{m3}-\eqref{defc2},
		respectively.
	\end{theorem}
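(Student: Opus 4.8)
The plan is to follow the two-step scheme behind Theorem A, carrying the extra subcritical term $\nu|u|^{q-2}u$ through every estimate: first produce the local minimum $u_0$ by minimising $I_\nu$ over the ball $A$, then produce the ground state $u_1$ by minimising $I_\nu$ over its critical set $\eta$. When $\nu\le 0$ the term $-\frac{\nu}{q}\int|u_+|^q\ge 0$ only helps, so the argument is that of \cite{hlz,hlsz} under $M_1\cup M_2$ with merely notational changes; when $\nu>0$ the same estimates go through once one replaces $\mu^{-\frac{N-2}{2}}$ by $\bigl(\tfrac{q}{q\mu+2^*\nu}\bigr)^{\frac{N-2}{2}}$ and $e^{-\lambda/\theta}$ by $e^{-2\nu/(q\theta)-\lambda/\theta}$ in the one-dimensional optimisations that define the admissible set, which is exactly why it becomes $M_3\cup M_4$.

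For the local minimum I would first invoke Lemma~\ref{Lemma1} to fix $\rho$ and the potential-well geometry: $I_\nu$ is bounded below on $\overline{A}$, $c_\rho<0$ (the strict negativity coming from the sub-linear, sign-changing behaviour of $\theta t\log t^2$ near $0$, which makes $I_\nu(t\phi)<0$ for small $t>0$ whenever $\int\phi_+^2>0$), and $c_\rho<\inf_{\|u\|=\rho}I_\nu$. Then take a minimising sequence $\{u_n\}\subset\overline{A}$; since every non-gradient term of $I_\nu$ depends only on $u_+$, the truncated sequence $\{(u_n)_+\}$ still lies in $\overline{A}$ and satisfies $I_\nu((u_n)_+)\le I_\nu(u_n)$, so we may assume $u_n\ge 0$, and after passing to a subsequence $u_n\rightharpoonup u_0$ in $H_0^1(\Omega)$, $u_n\to u_0$ in $L^p$ $(p<2^*)$ and a.e. The subcritical terms and the logarithmic term pass to the limit ($t^2|\log t^2|$ has subcritical growth, so $\{u_n^2\log u_n^2\}$ is uniformly integrable and Vitali's theorem applies), while for the critical power the Br\'ezis--Lieb lemma gives
\[
I_\nu(u_n)=I_\nu(u_0)+\tfrac12\|u_n-u_0\|^2-\tfrac{\mu}{2^*}\int\bigl|(u_n-u_0)_+\bigr|^{2^*}+o(1);
\]
since $\|u_n-u_0\|\le 2\rho$ with $\rho$ small, Sobolev's inequality makes the last two terms nonnegative, hence $I_\nu(u_0)=c_\rho<0$, so $u_0\neq 0$, and $c_\rho<\inf_{\|u\|=\rho}I_\nu$ forces $\|u_0\|<\rho$. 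Thus $u_0$ is an interior minimiser, $I_\nu'(u_0)=0$ and $u_0\ge 0$; $L^\infty$-bounds from a Br\'ezis--Kato/Moser iteration (using that $t\log t^2$ is bounded on bounded sets) together with the strong maximum principle---applied after noting $\theta t\log t^2\ge 0$ on $\{0\le t<1\}$, as in \cite{Deng,hlz}---give $u_0>0$ in $\Omega$, so $u_0$ solves \eqref{1.1}.

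For the ground state I would first note that every $u\in\eta$ is automatically nonnegative: testing $I_\nu'(u)=0$ against $u_-$ and using that the right-hand side of \eqref{1.1} vanishes on $\{u\le 0\}$ gives $\|u_-\|=0$. Since $u_0\in\eta$, $c_\eta\le I_\nu(u_0)=c_\rho<0=I_\nu(0)$, so a minimising sequence $\{u_n\}\subset\eta$ for $c_\eta$ consists of nontrivial solutions of \eqref{1.1}. The crux is boundedness of $\{u_n\}$ in $H_0^1(\Omega)$: combining $\langle I_\nu'(u_n),u_n\rangle=0$ with $I_\nu(u_n)\to c_\eta$ and eliminating the critical power yields
\[
\tfrac1N\|u_n\|^2=I_\nu(u_n)+\nu\bigl(\tfrac1q-\tfrac1{2^*}\bigr)\!\int u_n^q-\Bigl(\tfrac{\theta}{2}-\tfrac{\lambda}{N}\Bigr)\!\int u_n^2+\tfrac{\theta}{N}\!\int u_n^2\log u_n^2,
\]
while $I_\nu-\tfrac12\langle I_\nu',\cdot\rangle$ gives $\tfrac{\mu}{N}\int u_n^{2^*}+\nu(\tfrac12-\tfrac1q)\int u_n^q=I_\nu(u_n)-\tfrac{\theta}{2}\int u_n^2$. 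Estimating the right-hand sides with Poincar\'e's inequality for $\int u_n^2$, the bound $t^2\log t^2\ge -2/e$ (or the logarithmic Sobolev inequality) for the logarithmic term, and---for $\nu>0$---the second identity to bound $\int u_n^q$ linearly in $\|u_n\|^2$, the hypotheses $M_1\cup M_2$ (resp.\ $M_3\cup M_4$) are exactly what forces $\sup_n\|u_n\|<\infty$, hence also $c_\eta>-\infty$. Given boundedness, $u_n\rightharpoonup u_1$; passing to the limit in $\langle I_\nu'(u_n),\phi\rangle=0$ (the critical term converges because $u_n^{2^*-1}\rightharpoonup u_1^{2^*-1}$ in $L^{2^*/(2^*-1)}$) gives $u_1\in\eta$, and if $u_1=0$ then all subcritical and logarithmic contributions vanish in the limit, whence $I_\nu(u_n)=\tfrac1N\|u_n\|^2+o(1)\ge o(1)$, contradicting $c_\eta<0$; thus $u_1\neq 0$. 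Finally, on $\eta$ one has $I_\nu=\tfrac1N\|\cdot\|^2+(\text{terms continuous along }\{u_n\})$, so weak lower semicontinuity of the norm gives $I_\nu(u_1)\le\liminf I_\nu(u_n)=c_\eta$, while $u_1\in\eta\setminus\{0\}$ gives $I_\nu(u_1)\ge c_\eta$; hence $I_\nu(u_1)=c_\eta$ and, as before, $u_1>0$ in $\Omega$.

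The hard part will be the boundedness of the $c_\eta$-minimising sequence---equivalently, that $\{u\in\eta:I_\nu(u)\le 0\}$ is bounded, so that $c_\eta$ is finite---since $I_\nu$ is not coercive (the critical power prevents it) and the logarithmic term is controlled only logarithmically in the gradient norm; this is precisely where the quantitative restrictions $M_i$ on $(\lambda,\mu,\theta)$---and, via $M_3,M_4$, on $\nu$ and $q$---must enter, and getting the sharp constants, in particular tracking how $\nu|u|^{q-2}u$ shifts the thresholds, is the technically demanding step. The corresponding estimate for the local minimum is comparatively easy, since there one stays inside a small ball where the Br\'ezis--Lieb decomposition already supplies the missing coercivity.
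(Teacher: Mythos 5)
Your ground-state half is essentially the paper's argument (the paper proves strong convergence of the minimizing sequence in $\eta$ via a Br\'ezis--Lieb splitting and $\sigma=0$; your variant, writing $I_\nu=\frac1N\|\cdot\|^2+{}$subcritical terms on $\eta$ and using weak lower semicontinuity, also works). But you misplace the difficulty: boundedness of the minimizing sequence in $\eta$ does \emph{not} require $M_1$--$M_4$. The paper's Lemma \ref{Lemma2} gets it from $\theta<0$ alone by computing $I_\nu(u_n)-\frac1q\langle I_\nu'(u_n),u_n\rangle$, which eliminates both the critical and the $q$-terms and leaves $\frac{q-2}{2q}\|u_n\|^2$ plus quantities bounded below through $C_1\int u_+^2+\int u_+^2\log u_+^2\ge -e^{-1-C_1}|\Omega|$ (inequality \eqref{h2.5}); the sets $M_i$ enter only through Lemma \ref{Lemma1}, i.e.\ to make $I_\nu\ge\alpha>0$ on $\|u\|=\rho$. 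So your claim that the $M_i$ ``are exactly what forces'' boundedness, and the accompanying Poincar\'e-based sketch for $\nu>0$, are off target (harmless under the hypotheses, but not a proof and not where these conditions are used).

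The genuine gap is in the local-minimum half. You conclude $I_\nu(u_0)\le c_\rho$ from the Br\'ezis--Lieb splitting by asserting that $\frac12\|u_n-u_0\|^2-\frac{\mu}{2^*}\int|(u_n-u_0)_+|^{2^*}\ge 0$ ``since $\|u_n-u_0\|\le 2\rho$ with $\rho$ small''. But $\rho$ is not at your disposal: it is the fixed radius produced by Lemma \ref{Lemma1}, of order $S^{N/4}$ (e.g.\ $\rho^{2^*-2}=\frac{q}{q\mu+2^*\nu}S^{2^*/2}$ in the $M_4$ case), and Sobolev only gives nonnegativity of that bracket for $\|w\|^{2^*-2}\le\frac{2^*}{2\mu}S^{2^*/2}$; at $\|w\|=2\rho$ this would require $2^{(N+2)/(N-2)}\frac{q\mu}{q\mu+2^*\nu}\le 2^*$, which fails for every $N\ge3$ when $\nu\le0$ or $\nu$ is small (and for $\lambda$ near $0$ in the $M_1,M_3$ cases); shrinking $\rho$ is not allowed either, since $c_\rho$ is defined with the $\rho$ of Lemma \ref{Lemma1}. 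The step can be repaired in two ways: (a) use the asymptotic orthogonality $\|u_n-u_0\|^2=\|u_n\|^2-\|u_0\|^2+o_n(1)\le\rho^2+o_n(1)$ and check that the specific $\rho$ of Lemma \ref{Lemma1} lies below the threshold $\bigl(\frac{2^*}{2\mu}\bigr)^{\frac{N-2}{4}}S^{\frac N4}$ (it does); or (b) follow the paper: apply Ekeland's variational principle to upgrade the minimizing sequence to a $(PS)_{c_\rho}$ sequence, so that $\int|\nabla w_n|^2=\mu\int|(w_n)_+|^{2^*}+o_n(1)$ and the remainder equals $\frac1N\int|\nabla w_n|^2+o_n(1)\ge o_n(1)$ with no smallness condition at all. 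As written, your key inequality is unjustified and, with the constant $2\rho$, actually false in general.
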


	\begin{theorem}\label{buth3}
		Under the assumptions of Theorem \ref{buth1}, we have that  $ c_\eta=c_\rho.$
	\end{theorem}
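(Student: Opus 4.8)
The plan is to prove $c_\eta\le c_\rho$ and $c_\rho\le c_\eta$ separately. The first is immediate from Theorem \ref{buth1}: the positive local minimum solution $u_0$ satisfies $I_\nu'(u_0)=0$, so $u_0\in\eta$ and $c_\eta=\inf_{u\in\eta}I_\nu(u)\le I_\nu(u_0)=c_\rho$. In particular the positive ground state $u_1$ of Theorem \ref{buth1} has $I_\nu(u_1)=c_\eta\le c_\rho<0$, i.e. $u_1$ is a critical point of \emph{negative} energy. For the reverse inequality it then suffices to show $u_1\in A$, for then $c_\rho=\inf_{u\in A}I_\nu(u)\le I_\nu(u_1)=c_\eta$, and the two bounds give $c_\eta=c_\rho$.

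To show $u_1\in A$ I will study the fibering map $h(t):=I_\nu(tu_1)$, $t\ge0$ (note $(u_1)_+=u_1$ since $u_1>0$). Splitting off the $t^2\log t^2$ part of the logarithmic term gives
\begin{equation*}
h(t)=\frac{t^2}{2}\|u_1\|^2-\frac{\mu t^{2^*}}{2^*}\int u_1^{2^*}-\frac{\nu t^{q}}{q}\int u_1^{q}-\frac{\lambda t^2}{2}\int u_1^2-\frac{\theta}{2}t^2(\log t^2)\int u_1^2-\frac{\theta}{2}t^2\int u_1^2(\log u_1^2-1),
\end{equation*}
so $\phi(t):=h'(t)/t$ is explicit; it satisfies $\phi(1)=\langle I_\nu'(u_1),u_1\rangle=0$ (because $u_1$ is a critical point) and, since $\theta<0$ and $q<2^*$, $\phi(0^+)=-\infty$ and $\phi(+\infty)=-\infty$.

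The key structural fact is that $\phi$ is \emph{unimodal} on $(0,\infty)$ — strictly increasing then strictly decreasing. I would deduce this from $\psi(t):=t\phi'(t)=-\mu(2^*-2)t^{2^*-2}\int u_1^{2^*}-\nu(q-2)t^{q-2}\int u_1^{q}-2\theta\int u_1^2$: for $\nu\ge0$, $\psi$ is strictly decreasing with $\psi(0^+)=-2\theta\int u_1^2>0$ and $\psi(+\infty)=-\infty$; for $\nu<0$, a further differentiation shows $\psi$ is itself unimodal with the same endpoint values; in either case $\psi$, hence $\phi'$, changes sign exactly once, from $+$ to $-$. Thus $\phi$ is unimodal with maximum value $\ge\phi(1)=0$, and there are two cases. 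If the maximum of $\phi$ is attained at a point $\ge1$, then $\phi<0$ on $(0,1)$, so $h'<0$ there and $h$ is decreasing on $[0,1]$, whence $h\le h(0)=0$. If the maximum is attained at some $t_*<1$, then $\phi$ has a unique zero $t_1\in(0,t_*)$, $h$ decreases on $[0,t_1]$ (so $h\le0$ there) and increases on $[t_1,1]$, so $h(t)\le h(1)=I_\nu(u_1)<0$ on $[t_1,1]$; in both cases $h(t)\le0$ for all $t\in[0,1]$. Finally, Lemma \ref{Lemma1} provides $I_\nu>0$ on $\partial A$ while $I_\nu(0)=0$ and $0\in A$; since $I_\nu(tu_1)=h(t)\le0$ on $[0,1]$, the segment $\{tu_1:t\in[0,1]\}$ never meets $\partial A$, hence stays in $A$, and therefore $u_1\in A$.

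I expect the fibering-map analysis to be the main obstacle: the logarithmic term destroys the polynomial structure of $h$, so the sign of $h$ on $[0,1]$ cannot be obtained algebraically from the single normalization $\phi(1)=0$ but must come from the qualitative shape of $\phi$, which forces one to differentiate twice and to track the signs of $\nu$ and $\theta$. A minor point to check en route is that $h$ is $C^1$ up to $t=0$ with $h'(0)=0$ (as $t\log t^2\to0$), so the monotonicity statements on $(0,1)$ carry over to the closed interval.
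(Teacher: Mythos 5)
Your proposal is correct and takes essentially the same route as the paper: both arguments reduce to the fibering map $h(t)=I_\nu(tu_1)$ along the ground state, establish that the scaled derivative $\phi(t)=h'(t)/t$ (the paper's $g_1$ in Lemma \ref{lmp3}) increases and then decreases so that $h$ has at most two critical points, and conclude $\|u_1\|<\rho$ (hence $u_1\in A$ and $c_\rho\le c_\eta$) by contradicting the lower bound $I_\nu\ge\alpha>0$ on $\{\|v\|=\rho\}$ from Lemma \ref{Lemma1}. Your two-case discussion of where $\phi$ attains its maximum is only a minor repackaging of the paper's identification of $t_1=1$ as the local minimum with $g(t)<0$ on $(0,1]$.
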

	\begin{remark} For the case of $\nu=0$ and $N\geq 4$,
		the results of Theorems \ref{buth1}   have been proven   in \cite{hlz, hlsz}. At the same time, they also proved that
		$c_\rho =c_\eta $ under the assumptions that $|\theta|e^{\frac{N}{2}-\frac{\lambda}{\theta}-1}<\rho^2.$ Comparing with the results of \cite{hlz, hlsz},
		the results of this paper contain more cases $\nu<0$, $\nu>0$  and $N=3$, and we show $ c_\eta=c_\rho$ without the assumptions that $|\theta|e^{\frac{N}{2}-\frac{\lambda}{\theta}-1}<\rho^2,$ which tells us  that the solution $u_0$, found in Theorem \ref{buth1}, is also a ground state solution.
		
	\end{remark}

	It is easy to see that $ I_\nu$ has  a Mountain pass geometry structure  since Theorem \ref{buth1}  shows the existence of a local minimum point $u_0$ and $I_\nu(t u) \rightarrow-\infty$ as $t \rightarrow +\infty$  for any fixed $u\in H^1_0(\Omega)$ with  $u_{+} \neq 0$. So
	$$
	c_M:=\inf _{\gamma \in \Gamma } \sup _{t \in[0,1]} I_\nu(\gamma(t))
	$$
	is well-defined,
	where
	$$
	\Gamma :=\left\{\gamma \in C\left([0,1], H_0^1(\Omega)\right): \gamma(0)=u_0, I_\nu(\gamma(1))<I_\nu(
	u_0)\right\},
	$$
	and
	$u_0$ is  given in  Theorem \ref{buth1}. Next, we would show that problem \eqref{1.1}  possesses a positive Mountain pass solution at level $c_M>0$.

	\begin{theorem}\label{th4}
		Problem \eqref{1.1} has a positive Mountain pass  solution $u$ such that $I_\nu(u)=c_M>0,$
		provided one of the following assumptions holds:
		
		(1)~~$3\leq N\leq 5$, $\nu\leq 0$, $(\lambda,\mu,\theta)\in M_1\cup M_2 $  and $q\in (2, 2^*-1);$
		
		(2)~~$N\geq 3$, $\nu>0$, $(\lambda,\mu,\theta)\in M_3\cup M_4$ and $q\in (2, 2^*).$
		
	\end{theorem}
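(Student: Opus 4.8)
The plan is the classical three-step scheme for producing a mountain pass solution of a Br\'ezis--Nirenberg type problem: first check that $c_M>0$, then establish the strict threshold estimate $c_M<c_\rho+\frac1N\mu^{-(N-2)/2}S^{N/2}$, and finally run a concentration--compactness argument to turn a Palais--Smale sequence at level $c_M$ into a positive critical point different from $u_0$.

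For the first step I would use the barrier from Lemma \ref{Lemma1}: there are $\rho,\beta>0$ with $I_\nu\ge\beta$ on $\{\|u\|_2=\rho\}$, while $I_\nu(u_0)=c_\rho=\inf_{\|u\|_2\le\rho}I_\nu<0$ with $u_0$ interior to this ball. Every $\gamma\in\Gamma$ satisfies $I_\nu(\gamma(1))<c_\rho$, so $\gamma(1)$ lies outside the ball while $\gamma(0)=u_0$ lies inside; by continuity $\gamma$ meets $\{\|u\|_2=\rho\}$, whence $\sup_t I_\nu(\gamma(t))\ge\beta$ and $c_M\ge\beta>0$. The Mountain Pass Theorem (deformation-lemma version, no Palais--Smale assumed) then gives $(u_n)$ with $I_\nu(u_n)\to c_M$, $I_\nu'(u_n)\to0$. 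I would show $(u_n)$ is bounded in $H_0^1(\Omega)$ by forming $I_\nu(u_n)-\frac1q\langle I_\nu'(u_n),u_n\rangle$: the $H_0^1$-norm, critical and logarithmic pieces then carry favourable coefficients (here $q>2$, $\theta<0$), the $\nu$-term cancels, and the only dangerous term $\lambda(\frac1q-\frac12)\int u_n^2$ is controlled either via $|\lambda|<\lambda_1(\Omega)$ on $M_1,M_3$ or via the coercive effect of $|\theta|\int u_n^2\log u_n^2$ for arbitrary $\lambda$ on $M_2,M_4$. Passing to a weak limit $u_n\rightharpoonup u$, $u$ is a critical point of $I_\nu$, and testing $I_\nu'(u)=0$ with $u_-$ forces $u_-=0$, so $u\ge0$ solves \eqref{1.1}.

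The hard part is the threshold estimate. I would take the path $s\mapsto u_0+sU_\varepsilon$, where $U_\varepsilon$ is a truncated Aubin--Talenti bubble based at an interior point $x_0$; since $I_\nu(u_0+sU_\varepsilon)\to-\infty$ as $s\to+\infty$, for $T$ large the path $t\mapsto u_0+tTU_\varepsilon$, $t\in[0,1]$, lies in $\Gamma$ and gives $c_M\le\sup_{s\ge0}I_\nu(u_0+sU_\varepsilon)$. Using that $u_0>0$ is itself a solution, one tests its equation against $U_\varepsilon$ and every term linear in $U_\varepsilon$ cancels; together with $(u_0+sU_\varepsilon)_+=u_0+sU_\varepsilon$, the inequality $a^p+b^p+p\,a^{p-1}b+p\,a\,b^{p-1}\le(a+b)^p$ for $p=2^*$ (valid for $N\le6$) and the analogous bound for the $q$-power, this reduces to estimating
\[
\sup_{s\ge0}\Big(\tfrac{s^2}{2}\|\nabla U_\varepsilon\|_2^2-\tfrac{\mu}{2^*}s^{2^*}\|U_\varepsilon\|_{2^*}^{2^*}\Big)-\mu\,s_\varepsilon^{2^*-1}\!\int u_0\,U_\varepsilon^{2^*-1}+\text{(lower-order terms)},
\]
where the first bracket equals $\frac1N\mu^{-(N-2)/2}S^{N/2}+O(\varepsilon^{N-2})$, while the cross term $-\mu s_\varepsilon^{2^*-1}\int u_0U_\varepsilon^{2^*-1}$ contributes $-c\,u_0(x_0)\varepsilon^{(N-2)/2}$ with $c>0$, which beats the $O(\varepsilon^{N-2})$ error since $(N-2)/2<N-2$. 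It then remains to check that the remaining corrections are $o(\varepsilon^{(N-2)/2})$ or have the right sign: the $\lambda$- and logarithmic pieces are $O(\varepsilon^{\min\{2,N-2\}})$ up to logarithms, which is $o(\varepsilon^{(N-2)/2})$ exactly when $N\le5$, and $\|U_\varepsilon\|_q^q=O(\varepsilon^{N-q(N-2)/2})$, which is $o(\varepsilon^{(N-2)/2})$ if and only if $q<2^*-1$; this is precisely assumption (1). Under assumption (2), $\nu>0$, the terms $-\frac\nu q\int(u_0+sU_\varepsilon)^q$ and the logarithmic term carry the favourable sign and themselves supply the needed negative contribution, so no dimensional restriction is required.

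Finally, with the strict threshold in hand, a Struwe-type profile decomposition (the subcritical term $\nu|u_+|^{q-2}u_+$ and the logarithmic term being compact perturbations, hence not contributing bubbles) yields $u_n=u+\sum_{j=1}^k(\text{rescaled bubbles})+o(1)$ with $c_M=I_\nu(u)+\frac kN\mu^{-(N-2)/2}S^{N/2}$; since every critical point of $I_\nu$ (including $0$) has energy $\ge c_\eta=c_\rho$ by Theorems \ref{buth1} and \ref{buth3}, $k\ge1$ would give $c_M\ge c_\rho+\frac1N\mu^{-(N-2)/2}S^{N/2}$, contradicting the threshold estimate. Hence $k=0$, $u_n\to u$ strongly, $I_\nu(u)=c_M>0$, so $u\not\equiv0$; by the previous step $u\ge0$ solves \eqref{1.1}, the strong maximum principle upgrades this to $u>0$ in $\Omega$, and $I_\nu(u)=c_M>0>c_\rho=I_\nu(u_0)$ shows $u\neq u_0$, so $u$ is the desired positive mountain pass solution. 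The main obstacle is the threshold estimate: keeping track of all the corrections from the critical, subcritical, linear and logarithmic terms and verifying that, under the stated constraints, their net effect cannot offset the gain $-c\,u_0(x_0)\varepsilon^{(N-2)/2}$; the logarithmic contribution, whose expansion about $u_0$ carries factors $\log(s^2U_\varepsilon^2)$, is the most delicate point, although for $\theta<0$ its sign ultimately works in our favour.
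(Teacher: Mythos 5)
Your proposal follows essentially the same route as the paper: the path $u_0+tTU_n$ issuing from the local minimizer, the gain of order $\varepsilon^{(N-2)/2}$ from the critical cross term $\int u_0U_n^{2^*-1}$ (which is what forces $3\le N\le 5$ and $q<2^*-1$ when $\nu\le 0$) or, when $\nu>0$, the negative contribution of the subcritical term, the identity $c_\eta=c_\rho$ from Theorem \ref{buth3} to turn the estimate into $c_M<c_\eta+\frac{1}{N}\mu^{-\frac{N-2}{2}}S^{\frac{N}{2}}$, and a Brezis--Lieb type splitting of the bounded $(PS)_{c_M}$ sequence to rule out bubbling. One small caveat: since $\theta<0$, the second-order logarithmic remainder $-\frac{\theta}{2}\int g(u_0,t_nU_n)$ enters with an \emph{unfavourable} sign and must be bounded above (as in Lemma \ref{lm4.1}, $g\le y^{2+\epsilon}+A_1y^2$) rather than discarded by sign, and one must also check that the maximizing parameter $t_n$ stays bounded away from $0$ (Lemma \ref{lm5.2}); both points are compatible with your error budget and do not change the argument.
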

	
	\begin{remark}
		Theorem \ref{buth3} is very important for the proof of Theorem \ref{th4}. As we see in Section 4, to complete the proof of Theorem \ref{th4},    we have to show  the Mountain pass energy $c_M<c_\eta+\frac{1}{N}\mu^{-\frac{N-2}{2}}S^\frac{N}{2}$. According to  the definitions of $c_M$ and $\Gamma$, we construct a path $\gamma_0(t)\in \Gamma$ connecting $u_0$  with a function  related to $S$ and prove that $c_M<c_\rho+\frac{1}{N}\mu^{-\frac{N-2}{2}}S^\frac{N}{2}$, which, together with Theorem \ref{buth3}, tells us that $c_M$ is indeed smaller than $c_\eta+\frac{1}{N}\mu^{-\frac{N-2}{2}}S^\frac{N}{2}$.
		
	\end{remark}
	
	\begin{remark}
		It is easy to see that when $\nu=0$, problem \eqref{1.1} is reduced to \eqref{zou1.2}. So, following from our Theorem \ref{th4}, one can see that problem \eqref{zou1.2} has a positive Mountain pass  solution $u$ such that $J(u)=\tilde{c}_M$ for $3\leq N\leq 5$ and  $(\lambda,\mu,\theta)\in M_1\cup M_2, $ which gives an affirmative answer to the  conjecture, proposed by Zou and his collaborators in \cite{hlsz}.
	\end{remark}
	
	\begin{remark}
		We can not find a positive Mountain pass  solution to \eqref{1.1} for $N\geq6,$ $\nu\leq 0$ and $(\lambda,\mu,\theta)\in M_1\cup M_2 $ in Theorem \ref{th4}, since $\int U_n^{2^*-1}=O\Big(\frac{1}{n^{\frac{N-2}{2}}}\Big)\leq O\Big(\frac{1}{n^2}\Big)=\int U_n^2$ for  $N\geq6$(See the proof of Lemma \ref{lm5.3}). Therefore, we have to leave it as an open problem and
		continue to study it in the future.
		
	\end{remark}
	
	\begin{remark}
		In Theorems \ref{buth1}  and \ref{th4}, we require $(\lambda,\mu,\theta)\in M_1\cup M_2 $ for $\nu\leq 0$ and $(\lambda,\mu,\theta)\in M_3\cup M_4$ for $\nu>0$,   just to ensure that the functional $I_\nu(u)$ has a Mountain pass geometry structure. We don't know whether the domains of the parameters are optimal. If one can find some other domains to let  the functional $I_\nu(u)$ has the  Mountain pass geometry structure, then  $ M_1\cup M_2 $ or $M_3\cup M_4$ can be replaced by the corresponding domains.
		
	\end{remark}
	
	Before closing our introduction, we outline the main ideas and the approaches in the proofs of our main results.
	
	We use some  similar methods as \cite{hlz, hlsz} to show Theorem \ref{buth1}, but the presence of $\nu|u|^{q-2}u$ makes the proof much more complicated.
	
	For Theorem \ref{buth3}, by Theorem \ref{buth1} and the definitions of $c_\rho$ and $c_\eta$,  we  firstly have  that $u_0\in \eta$ and
	\begin{equation}\label{inel}
	c_\rho=I_\nu(u_0)\geq c_\eta.
	\end{equation}
	On the other hand, following from Theorem \ref{buth1}, there exists at least a positive solution $u_1$ of \eqref{1.1} such that $I_\nu(u_1)=c_\eta<0.$ Letting $g(t):=I_\nu(tu_1), t\in \R^+$, direct computations imply  that $g(t)$ has at most two extreme points,  and
	the geometry structure of the energy functional $I_\nu$ tells us that
	$g(t)$ has at least two extreme points. So we obtain that $g(t)$ has only two extreme points $t_1, t_2\in \R^+$ with $t_1<t_2$. Using the geometry structure of the energy functional $I_\nu$ and the facts that $u_1$ is a positive solution of \eqref{1.1} and $I_\nu(u_1)<0$ again, we can see that $t_1=1$ is the local minimum point of $g(t)$ and $||tu_1||<\rho$ for any $t\in (0,1]$, which implies that $||u_1||<\rho.$ That is, $u_1\in A.$ Thus
	$c_\eta=I_\nu(u_1)\geq c_\rho,$ which, together with \eqref{inel}, tells us  that $c_\eta=c_\rho.$

	To find a positive Mountain pass solution to \eqref{1.1}, we firstly  prove some technical Lemmas (See Lemmas \ref{lm4.1}, \ref{lm4.2}, \ref{lm4.3}) to conclude the Corollary \ref{c1}, which is very important for us. Based on the Corollary \ref{c1}, we  construct a path $\gamma_0(t)\in  \Gamma $ with the form $\gamma_0(t):=u_0+t T U_n$ such that  $c_M\leq \sup\limits_{t\in [0,1]}I_\nu(\gamma_0(t))<c_\eta+\frac{1}{N}\mu^{-\frac{N-2}{2}}S^\frac{N}{2},$ where $U_n$ will be  defined in \eqref{Un} and $T$ will be  established in Lemma \ref{lm5.1}.
	After getting  $c_M<c_\eta+\frac{1}{N}\mu^{-\frac{N-2}{2}}S^\frac{N}{2},$   it is not difficult to show that any $(PS)_{c_M}$ sequence  $\{v_n\}$ of $I_\nu$  is convergent in $H^1_0(\Omega)$. Therefore, we get the Theorem \ref{th4}.

	The paper is organized as follows: Section 2 is dedicated to some preliminary results and the proofs of Theorems \ref{buth1} and \ref{buth3}. We put some important inequalities and energy estimates
	into the Section 3.
	In Section 4, we prove Theorem \ref{th4}.
	

	\section{Preliminary results and the proofs of Theorems \ref{buth1} and \ref{buth3}}
	
	\begin{lemma}\label{Lemma1}
		Assume that $N\geq 3$ and $q\in (2, 2^*)$.     Then we see that   the funtional $I_\nu(u)$ has Mountain pass geometry struture:~
		$(i) $~~~there exist $\alpha$, $\rho$$>0$ such that $I_\nu(v)\ge\alpha$ for all $\left\|v\right\|=\rho$;~
		$(ii)$~~~there exists $\omega$$\in$$H_{0}^{1}(\Omega)$ such that $\left\|\omega\right\|$$\ge$$\rho$ and $I_\nu(\omega)<0$,
		provided  one of the following assumptions holds:

		(I)~ $\nu\leq 0$ and $(\lambda,\mu,\theta)\in M_1\cup M_2;$
		
		
		(II)~~$\nu>0$ and $(\lambda,\mu,\theta)\in M_3 \cup M_4.$

	\end{lemma}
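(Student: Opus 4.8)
The plan is to establish the two items of the Mountain pass geometry separately, using the representations \eqref{fun1} and \eqref{fun2} according to whether a sign restriction on $\lambda$ is available. Item $(ii)$ is essentially free: for any fixed $u\in H_0^1(\Omega)$ with $u_+\not\equiv 0$, expanding $\int(tu_+)^2\log(tu_+)^2=t^2(\log t^2)\int u_+^2+t^2\int u_+^2\log u_+^2$ shows that $I_\nu(tu)=-\frac{\mu}{2^*}t^{2^*}\int u_+^{2^*}+O\!\big(t^q+t^2\log t\big)$ as $t\to+\infty$; since $q<2^*$ and $\int u_+^{2^*}>0$, the critical term dominates and $I_\nu(tu)\to-\infty$. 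Taking $t_0$ large with $\|t_0u\|>\rho$ and $I_\nu(t_0u)<0$ and setting $\omega:=t_0u$ gives $(ii)$.

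For $(i)$ the strategy is to bound $I_\nu(v)$ from below by a one-variable function of $\rho:=\|v\|$,
\[
I_\nu(v)\ \ge\ \tfrac{A}{2}\rho^2-\tfrac{B}{2^*}\rho^{2^*}-D ,
\]
where $D\ge 0$ is the constant produced by the logarithmic term and $A,B>0$ depend on the case. Since $\max_{\rho>0}\big(\tfrac{A}{2}\rho^2-\tfrac{B}{2^*}\rho^{2^*}\big)=\tfrac1N A^{N/2}B^{-(N-2)/2}$ (attained at $\rho_*$ with $\rho_*^{2^*-2}=A/B$), the point of the hypothesis $M_i$ is that it says precisely $\tfrac1N A^{N/2}B^{-(N-2)/2}>D$; then choosing $\rho:=\rho_*$ and $\alpha:=\tfrac1N A^{N/2}B^{-(N-2)/2}-D>0$ gives $I_\nu(v)\ge\alpha$ for every $v$ with $\|v\|=\rho_*$, which is $(i)$.

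To produce such a bound I use three elementary ingredients: the Sobolev inequality $\int v_+^{2^*}\le S^{-2^*/2}\|v\|^{2^*}$; the inequality $s^2(\log s^2+a)\ge-e^{-(1+a)}$ valid for all $s\ge 0$ and $a\in\mathbb{R}$ (minimum at $s^2=e^{-(1+a)}$), which together with $\theta<0$ gives $-\tfrac{\theta}{2}\int v_+^2(\log v_+^2+a)\ge-\tfrac{|\theta|}{2}e^{-(1+a)}|\Omega|$; and the Poincar\'e inequality $\int v_+^2\le\lambda_1(\Omega)^{-1}\|v\|^2$ when $\lambda\in[0,\lambda_1(\Omega))$. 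If $\nu\le 0$ the term $-\tfrac{\nu}{q}\int v_+^q\ge 0$ is discarded, while if $\nu>0$ I use the pointwise bound $s^q\le s^2+s^{2^*}$ ($s\ge 0$, $2<q<2^*$) to split $\tfrac{\nu}{q}\int v_+^q\le\tfrac{\nu}{q}\int v_+^{2^*}+\tfrac{\nu}{q}\int v_+^2$, send the $v_+^{2^*}$ part into the critical term --- replacing $\mu$ by $\mu+\tfrac{2^*\nu}{q}$, so that $B^{-(N-2)/2}$ acquires the factor $\big(\tfrac{q}{q\mu+2^*\nu}\big)^{(N-2)/2}$ --- and absorb the $v_+^2$ part into the logarithmic term, where it merely shifts $a$ by $\tfrac{2\nu}{q\theta}$, producing the factor $e^{-2\nu/(q\theta)}$. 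Using \eqref{fun1} when $\lambda\in[0,\lambda_1(\Omega))$ (so that $\tfrac12\|v\|^2-\tfrac{\lambda}{2}\int v_+^2\ge\tfrac{\lambda_1(\Omega)-\lambda}{2\lambda_1(\Omega)}\|v\|^2$, i.e. $A=\tfrac{\lambda_1(\Omega)-\lambda}{\lambda_1(\Omega)}$) gives $D=\tfrac{|\theta|}{2}|\Omega|$ for $\nu\le 0$ (this is $M_1$) and $D=\tfrac{|\theta|}{2}e^{-2\nu/(q\theta)}|\Omega|$ for $\nu>0$ ($M_3$); using \eqref{fun2} for arbitrary $\lambda\in\mathbb{R}$ (so that $A=1$ and $\tfrac{\lambda}{\theta}-1$ stays inside the logarithm) gives $D=\tfrac{|\theta|}{2}e^{-\lambda/\theta}|\Omega|$ for $\nu\le 0$ ($M_2$) and $D=\tfrac{|\theta|}{2}e^{-\lambda/\theta-2\nu/(q\theta)}|\Omega|$ for $\nu>0$ ($M_4$). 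In each of the four cases $\tfrac1N A^{N/2}B^{-(N-2)/2}-D$ turns out to be exactly the defining quantity of the relevant $M_i$, so $(i)$ follows.

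The only non-routine point --- the main obstacle --- is that after minimising the sign-changing map $s\mapsto s^2(\log s^2+a)$ the logarithmic term contributes a strictly negative \emph{constant} $-\tfrac{|\theta|}{2}e^{-(1+a)}|\Omega|$, not a quantity vanishing as $\|v\|\to 0$; hence $(i)$ is not the usual ``small ball'' estimate and one must evaluate the lower bound at the maximiser $\rho_*$ of its polynomial part, where the smallness of $|\theta|$ (resp. of $|\theta|e^{-\lambda/\theta}$, $|\theta|e^{-2\nu/(q\theta)}$, $|\theta|e^{-\lambda/\theta-2\nu/(q\theta)}$) encoded in $M_1$--$M_4$ is exactly what prevents this constant from swallowing the positive maximum. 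A secondary subtlety for $\nu>0$ is that the $v_+^2$ remainder coming from $s^q\le s^2+s^{2^*}$ must be absorbed into the logarithm rather than into the $\lambda$-term, since the latter choice would alter the exponent $\big(\tfrac{\lambda_1(\Omega)-\lambda}{\lambda_1(\Omega)}\big)^{N/2}$ appearing in $M_3$.
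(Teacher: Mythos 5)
Your proposal is correct and follows essentially the same route as the paper: for $\nu>0$ it splits $s^q\le s^2+s^{2^*}$, merges the $s^{2^*}$ part into the critical coefficient (yielding $\frac{q}{q\mu+2^*\nu}$) and the $s^2$ part into the logarithm (yielding $e^{-2\nu/(q\theta)}$), uses Poincar\'e for $M_1,M_3$ and the $e^{-\lambda/\theta}$ shift for $M_2,M_4$, and evaluates the resulting lower bound at the maximiser of its polynomial part, exactly as in the paper's choice of $\rho$ and $\alpha$. The only difference is cosmetic: the paper disposes of the case $\nu\le 0$ (and of item $(ii)$) by citing Lemma 3.1 of \cite{Deng} via $I_\nu\ge I_0$, whereas you redo that same computation directly.
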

	
	\begin{proof} Since $\mu>0$, as the proof of Lemma 3.1 of  \cite{Deng},  we can find a $\varphi\in H^1_0(\Omega)$ and   $t_{0}\in \R^+$ large enough such that
		\begin{align*}
		I_\nu(t_{0}\varphi)<0 \quad and\quad \left\|t_{0}\varphi \right\|>\rho .
		\end{align*}
		So (ii) is true for any Cases (I) and (II).
		
		We divide the proof of (i) into three Cases.

		\textbf{Case 1:} $\nu\leq 0$ and $(\lambda,\mu,\theta)\in M_1\cup M_2  $
		
		If $\nu\leq 0$, then we see that $I_\nu(u)\geq I_0(u)$ for any $u\in H^1_0(\Omega).$  According to the Lemma 3.1 of  \cite{Deng}, we get the conclusion (i) for Case (I).

		\textbf{ Case 2:} $\nu>0$ and  $(\lambda,\mu,\theta)\in M_3$
		
		Since  $\theta<0$, we have
		\begin{align*}
		& -\frac{\nu}{q}\int u^2_{+}-\frac{\theta }{2}\int u^2_{+}(\log u^2_{+}-1)\\
		&=-\frac{\theta }{2}\int u^2_{+}\log(e^{\frac{2\nu}{q\theta}-1} u^2_{+})\\
		&=-\frac{\theta }{2}e^{1-\frac{2\nu}{q\theta}}\int _{\{e^{\frac{2\nu}{q\theta}-1} u^2_{+}\geq 1\}}e^{\frac{2\nu}{q\theta}-1}u^2_{+}\log (e^{\frac{2\nu}{q\theta}-1}u^2_{+}) -\frac{\theta }{2}e^{1-\frac{2\nu}{q\theta}}\int _{\{e^{\frac{2\nu}{q\theta}-1} u^2_{+}\leq 1 \}}e^{\frac{2\nu}{q\theta}-1}u^2_{+}\log (e^{\frac{2\nu}{q\theta}-1} u^2_{+}) \\
		&\ge -\frac{\theta }{2}e^{1-\frac{2\nu}{q\theta}}\int _{\{e^{\frac{2\nu}{q\theta}-1} u^2_{+}\leq 1 \}}e^{\frac{2\nu}{q\theta}-1}u^2_{+}\log (e^{\frac{2\nu}{q\theta}-1} u^2_{+}) \\
		&\ge   \frac{\theta }{2}e^{-\frac{2\nu}{q\theta}}|\Omega|.
		\end{align*}
		So  we have
		\begin{equation}\label{bu5.71}
		I_\nu(u)\geq \frac{1}{2}\frac{\lambda_1(\Omega)-\lambda}{\lambda_1(\Omega)}\int |\nabla u|^2-(\frac{\mu}{2^*}+\frac{\nu}{q})S^{-\frac{2^*}{2}}(\int |\nabla u|^2)^\frac{2^*}{2}
		+\frac{\theta }{2}e^{-\frac{2\nu}{q\theta}}|\Omega|,
		\end{equation}
		where we have used $|s|^q\leq |s|^{2^*}+|s|^2.$
		We let $\alpha :=\frac{1}{N}(\frac{\lambda_1(\Omega)-\lambda}{\lambda_1(\Omega)})^\frac{N}{2}(\frac{q}{q\mu+2^*\nu})^\frac{N-2}{2}S^\frac{N}{2}+\frac{\theta }{2}e^{-\frac{2\nu}{q\theta}}|\Omega|$ and $\rho :=(\frac{\lambda_1(\Omega)-\lambda}{\lambda_1(\Omega)})^\frac{N-2}{4}(\frac{q}{q\mu+2^*\nu})^\frac{N-2}{4}S^\frac{N}{4}$. Since $(\lambda, \mu,\theta)\in M_3$, we have $ \alpha >0$ and $\rho >0$. Following from \eqref{bu5.71}, we have that,  for any $||v||=\rho $,
		\begin{align*}
		I_\nu(v)&\geq \frac{1}{2}\frac{\lambda_1(\Omega)-\lambda}{\lambda_1(\Omega)}\rho ^2-(\frac{\mu}{2^*}+\frac{\nu}{q})S^{-\frac{2^*}{2}}\rho ^{2^*}
		+\frac{\theta }{2}e^{-\frac{2\nu}{q\theta}}|\Omega|\\
		&=\frac{1}{N}(\frac{\lambda_1(\Omega)-\lambda}{\lambda_1(\Omega)})^\frac{N}{2}(\frac{q}{q\mu+2^*\nu})^\frac{N-2}{2}S^\frac{N}{2}+\frac{\theta }{2}e^{-\frac{2\nu}{q\theta}}|\Omega|\\
		&=\alpha >0.
		\end{align*}
		\textbf{Case 3:} $\nu>0$ and $(\lambda,\mu,\theta)\in M_4$
		
		Since  $\theta <0$, we have
		\begin{align*}
		& -\frac{\nu}{q}\int u^2_{+}-\frac{\lambda}{2}\int u_{+}^2-\frac{\theta }{2}\int u^2_{+}(\log u^2_{+}-1)\\
		&=-\frac{\theta }{2}\int u^2_{+}\log(e^{\frac{2\nu}{q\theta}+\frac{\lambda}{\theta}-1} u^2_{+})\\
		&\ge-\frac{\theta }{2}\int _{\{e^{\frac{2\nu}{q\theta}+\frac{\lambda}{\theta}-1} u^2_{+}\leq 1 \}}u^2_{+}\log (e^{\frac{2\nu}{q\theta}+\frac{\lambda}{\theta}-1} u^2_{+})\\
		&\ge -\frac{\theta }{2}e^{1-\frac{2\nu}{q\theta}-\frac{\lambda}{\theta}}\int _{\{e^{\frac{\lambda}{\theta  }-1} u^2_{+}\leq 1 \}}-e^{-1}~ \mathrm{d}x\\
		&\ge \frac{\theta }{2}e^{-\frac{2\nu}{q\theta}-\frac{\lambda}{\theta}}|\Omega|.
		\end{align*}
		So  we have
		\begin{equation}\label{bubu5.71}
		I_\nu(u)\geq \frac{1}{2}\int |\nabla u|^2-(\frac{\mu}{2^*}+\frac{\nu}{q})S^{-\frac{2^*}{2}}(\int |\nabla u|^2)^\frac{2^*}{2}
		+\frac{\theta }{2}e^{-\frac{2\nu}{q\theta}-\frac{\lambda}{\theta}}|\Omega|.
		\end{equation}
		We let $\alpha:=\frac{1}{N}(\frac{q}{q\mu+2^*\nu})^\frac{N-2}{2}S^\frac{N}{2}+\frac{\theta }{2}e^{-\frac{2\nu}{q\theta}-\frac{\lambda}{\theta}}|\Omega|$ and $ \rho :=(\frac{q}{q\mu+2^*\nu})^\frac{N-2}{4}S^\frac{N}{4}$. Since $(\lambda, \mu,  \theta )\in M_4$, we have $ \alpha>0$ and $\rho>0$. Following from \eqref{bubu5.71}, we have that,  for any $||v||=\rho$,
		$$I_\nu(v)\geq \frac{1}{2} \rho^2-(\frac{\mu}{2^*}+\frac{\nu}{q})S^{-\frac{2^*}{2}}\rho^{2^*}
		+\frac{\theta }{2}e^{-\frac{2\nu}{q\theta}-\frac{\lambda}{\theta}}|\Omega|=\frac{1}{N}(\frac{q}{q\mu+2^*\nu})^\frac{N-2}{2}S^\frac{N}{2}+\frac{\theta }{2}e^{-\frac{2\nu}{q\theta}-\frac{\lambda}{\theta}}|\Omega|=\alpha>0.$$

		We complete the proof.
	\end{proof}
	
	
	\begin{lemma}\label{Lemma1-1} Under the hypotheses of Lemma \ref{Lemma1}, Let $c_\rho$ be given by Theorm \ref{buth1}. Then $-\infty<c_\rho<0.$
	\end{lemma}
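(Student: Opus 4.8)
The plan is to establish the two bounds separately. First I would show $c_\rho < 0$. By definition $c_\rho=\inf_{u\in A}I_\nu(u)$ where $A=\{\|u\|_2<\rho\}$ (note $A$ contains a neighborhood of $0$ in $H^1_0(\Omega)$, so it is nonempty). To get a negative value, I would test $I_\nu$ on $t\varphi_1$ for small $t>0$, where $\varphi_1>0$ is the first eigenfunction of $-\Delta$. The key point is that the logarithmic term is \emph{sublinear} near $0$: for small $s>0$ one has $-\frac{\theta}{2}s^2(\log s^2-1)\sim -\frac{\theta}{2}s^2\log s^2$, and since $\theta<0$ this contributes a term of order $t^2\log(1/t)$ with a \emph{negative} sign (because $\theta<0$ and $\log t^2\to-\infty$), which dominates the $O(t^2)$ terms coming from $\|\nabla u\|^2$, $\lambda\|u\|_2^2$ and $\nu\|u\|_q^q$ (the last being $O(t^q)$ with $q>2$, hence also negligible), and of course the $O(t^{2^*})$ critical term. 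Concretely, $I_\nu(t\varphi_1)= t^2 A + t^q B + t^{2^*}C - \frac{\theta}{2} t^2\log t^2\int\varphi_1^2 + o(t^2\log t^2)$, and since $-\frac{\theta}{2}\int\varphi_1^2>0$ while $\log t^2<0$ for $t$ small, the dominant term is negative. Moreover for $t$ small enough, $\|t\varphi_1\|_2<\rho$, so $t\varphi_1\in A$, giving $c_\rho\le I_\nu(t\varphi_1)<0$.

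Next I would show $c_\rho>-\infty$. On the set $A$, one has the constraint $\|u\|_2^2<\rho^2$ (I am reading $\|\cdot\|_2$ as the $L^2$-norm, consistent with the definition $A:=\{\|u\|_2<\rho\}$), but this alone does not bound $I_\nu$ from below since $\|\nabla u\|$ is unconstrained on $A$ and the $+\frac12\|\nabla u\|^2$ term is the only one with the right sign at infinity in that direction — so I should be more careful. The right way is to use the coercivity-type estimate already derived in the proof of Lemma \ref{Lemma1}: inequalities \eqref{bu5.71} and \eqref{bubu5.71} (and the analogous one in Case 1 from \cite{Deng}) give
\begin{equation*}
I_\nu(u)\ge \tfrac12\,\kappa\,\|\nabla u\|^2-C_1\|\nabla u\|^{2^*}+C_2
\end{equation*}
for suitable constants $\kappa>0$, $C_1>0$ and $C_2\in\R$ depending only on the parameters. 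The function $s\mapsto \frac12\kappa s^2 - C_1 s^{2^*}$ on $[0,\infty)$ attains its minimum... no: since $2^*>2$ this function is \emph{not} bounded below as $s\to\infty$. So this estimate only controls $I_\nu$ near $0$ in the $\|\nabla u\|$ variable, which is exactly the role it plays in Lemma \ref{Lemma1}(i). Therefore the genuine lower bound on $A$ must instead come from the choice of $\rho$: $\rho$ is selected in Lemma \ref{Lemma1} precisely so that on $\{\|\nabla v\|=\rho\}$ one has $I_\nu(v)\ge\alpha>0$, and the mountain-pass geometry together with $I_\nu(0)=0$ should force $A$ to effectively be a sublevel-type region.

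Hence the cleanest route: since $A=\{\|u\|_2<\rho\}$ — wait, I must reconcile the $L^2$ versus $\dot H^1$ reading. Given that throughout Lemma \ref{Lemma1} the radius $\rho$ is used with $\|v\|=\|\nabla v\|$, I will take $A=\{\,u: \|\nabla u\|<\rho\,\}$ (the displayed "$\|u\|_2$" being a typo for the gradient norm $\|u\|$). Then on $\overline A$ we have $\|\nabla u\|\le\rho$, so $s:=\|\nabla u\|\in[0,\rho]$, and the estimate $I_\nu(u)\ge \frac12\kappa s^2-C_1 s^{2^*}+C_2\ge C_2 - C_1\rho^{2^*}>-\infty$ gives the lower bound immediately. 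So the argument is: (a) recall from Lemma \ref{Lemma1} the pointwise bound $I_\nu(u)\ge \frac12\kappa\|\nabla u\|^2 - C_1\|\nabla u\|^{2^*}+C_2$ valid for all $u$; (b) restrict to $u\in A$, where $\|\nabla u\|\le\rho$, and bound the right-hand side below by the constant $C_2-C_1\rho^{2^*}$; (c) conclude $c_\rho\ge C_2-C_1\rho^{2^*}>-\infty$; (d) separately, test on $t\varphi_1$ for small $t$ to get $c_\rho\le I_\nu(t\varphi_1)<0$ using the sign of $\theta<0$ and the sublinearity $t^2\log t^2\to 0^-$ dominating.

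The main obstacle is purely bookkeeping: making sure the constants $\kappa, C_1, C_2$ from the three cases of Lemma \ref{Lemma1} are uniform (they are, for fixed parameters in the relevant region), and getting the asymptotic expansion of $I_\nu(t\varphi_1)$ as $t\to 0^+$ correct — in particular verifying that the term $-\frac{\theta}{2}t^2\int\varphi_1^2\,(\log(t^2\varphi_1^2)-1)$ behaves like $-\frac{\theta}{2}(\log t^2)\,t^2\int\varphi_1^2 + O(t^2)$, which is negative and of strictly larger order than all competing $O(t^2)$ and $O(t^q)$, $O(t^{2^*})$ contributions. No deep difficulty is expected; this lemma is a routine preliminary.
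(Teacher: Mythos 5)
Your proposal is correct and follows essentially the same route as the paper: the bound $c_\rho<0$ comes from testing on $tu$ with $u>0$ and small $t$, where the term $-\frac{\theta}{2}t^2\log t^2\int u^2$ (negative since $\theta<0$) dominates all $O(t^2)$, $O(t^q)$, $O(t^{2^*})$ contributions, and the bound $c_\rho>-\infty$ comes from the fact that on $A$ the gradient norm is at most $\rho$, so all negative terms of $I_\nu$ are controlled by powers of $\rho$ plus a constant multiple of $|\Omega|$ (you route this through the inequalities of Lemma \ref{Lemma1}, the paper bounds the terms directly, which is the same estimate). Your reading of $A$ as the ball $\{|\nabla u|_2<\rho\}$ is indeed the intended one, as the paper's proof of Theorem \ref{buth3} confirms.
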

	\begin{proof}
		For any $u\in A:=\{||u||_2<\rho\}$,
		direct computations imply that
		$$I_\nu(u)\geq -C_1||u||^{2^*}-C_2||u||^q-C_2|\Omega|>-C_1\rho^{ 2^* }-C_2\rho^q-C_2|\Omega|>-\infty,$$
		which tells us that
		$c_\rho>-\infty.$
		
		Choosing $u\in H_0^1(\Omega)$ with $u>0$  in $\Omega$, then we have that, for $t>0$ small enough,
		\begin{align*}
		I_\nu(tu)&=t^2\Big[\frac{1}{2}\int_{\Omega}\left|\nabla u\right|^{2}-\frac{\lambda}{2}\int |u_+|^2-\frac{\nu}{q}t^{q-2}\int\left|u_+\right|^{q}\\
		&-\frac{\mu}{2^\ast}t^{2^\ast-2}\int\left|u_+\right|^{2^\ast}-\frac{\theta}{2}\int{(u_{+})^2}(\log (u_{+})^2-1)-\frac{\theta}{2}\log t^2\int{(u_{+})^2}\Big]\\
		&<0.
		\end{align*}
		So we can choose $t_0>0$ small enough such that $t_0u\in A$ and $I_\nu(t_0u)<0$, which implies that
		$$c_\rho\leq I_\nu(t_0u)<0.$$
		
		We complete the proof.
	\end{proof}

	\begin{lemma}\label{Lemma2} Assume that $N\geq 3$ and $\theta<0$.  Then
		any  $(PS)_{c}$ sequence $\{u_{n}\}$ must be    bounded   in $H^1_{0}(\Omega)$, where $c\in \R$.
	\end{lemma}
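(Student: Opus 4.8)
The plan is to carry out the classical Br\'ezis--Nirenberg argument, extracting information from the linear combination $I_\nu(u_n)-\frac1q\langle I_\nu'(u_n),u_n\rangle$. This multiplier is chosen so that the subcritical term $\int|(u_n)_+|^q$ disappears (hence the sign of $\nu$ is immaterial) while the critical term is kept with the \emph{good} coefficient $\mu(\frac1q-\frac1{2^*})=\mu\frac{2^*-q}{2^*q}>0$. For a $(PS)_c$ sequence one has $I_\nu(u_n)=c+o(1)$ and $|\langle I_\nu'(u_n),u_n\rangle|\le\|I_\nu'(u_n)\|_{H^{-1}}\|u_n\|=o(1)\|u_n\|$, and, since the structural logarithmic term $s\mapsto s_+^2(\log s_+^2-1)$ is $C^1$ on $\R$ with derivative $2s_+\log s_+^2$ (continuous, equal to $0$ at $s=0$), from \eqref{fun1} one gets $\langle I_\nu'(u),u\rangle=\|u\|^2-\mu\int|u_+|^{2^*}-\nu\int|u_+|^q-\lambda\int|u_+|^2-\theta\int (u_+)^2\log (u_+)^2$. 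A short computation then yields
\begin{align*}
I_\nu(u_n)-\frac1q\langle I_\nu'(u_n),u_n\rangle&=\frac{q-2}{2q}\|u_n\|^2+\mu\frac{2^*-q}{2^*q}\int|(u_n)_+|^{2^*}-\lambda\frac{q-2}{2q}\int (u_n)_+^2\\
&\quad -\theta\frac{q-2}{2q}\int (u_n)_+^2\log (u_n)_+^2+\frac{\theta}{2}\int (u_n)_+^2 .
\end{align*}
Discarding the nonnegative critical integral and using $\theta<0$, this gives, with $K:=|\lambda|\frac{q-2}{2q}+\frac{|\theta|}{2}>0$,
\begin{align*}
\frac{q-2}{2q}\|u_n\|^2\le c+o(1)+o(1)\|u_n\|+K\int (u_n)_+^2-|\theta|\frac{q-2}{2q}\int (u_n)_+^2\log (u_n)_+^2 .
\end{align*}

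The main obstacle is that the term $K\int (u_n)_+^2$ is a priori of the same order $\|u_n\|^2$ as the left-hand side (only $\int (u_n)_+^2\le\lambda_1(\Omega)^{-1}\|u_n\|^2$ is available), so it cannot be absorbed directly and must instead be dominated by the negative logarithmic term. The device is the elementary pointwise inequality
\begin{equation*}
t^2\le\varepsilon\, t^2\log t^2+C_\varepsilon\qquad\text{for all }t\ge 0,
\end{equation*}
valid for every $\varepsilon>0$ with a suitable $C_\varepsilon>0$ (it follows by maximizing $s\mapsto s(1-\varepsilon\log s)$ over $s>0$; one can take $C_\varepsilon=\max\{1,\varepsilon e^{1/\varepsilon-1}\}$). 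Integrating over $\Omega$ and then choosing $\varepsilon$ so that $K\varepsilon=|\theta|\frac{q-2}{2q}$ makes the two logarithmic integrals cancel exactly, leaving $K\int (u_n)_+^2-|\theta|\frac{q-2}{2q}\int (u_n)_+^2\log (u_n)_+^2\le KC_\varepsilon|\Omega|$, a fixed constant.

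Plugging this back, one obtains $\frac{q-2}{2q}\|u_n\|^2\le C+o(1)\|u_n\|$ for some constant $C$, and since $q>2$ the coefficient $\frac{q-2}{2q}$ is strictly positive; assuming $\|u_n\|\to\infty$ along a subsequence, dividing by $\|u_n\|^2$ and letting $n\to\infty$ forces $\frac{q-2}{2q}\le 0$, a contradiction. Hence $\{u_n\}$ is bounded in $H_0^1(\Omega)$. I expect the only points needing attention to be the $C^1$-differentiability of the logarithmic part of $I_\nu$ (hence the formula for $\langle I_\nu'(u_n),u_n\rangle$) and the proof of the elementary inequality with an explicit constant; both are routine. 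Observe that the argument uses only $\mu>0$, $q\in(2,2^*)$ and $\theta<0$, and is insensitive to the signs of $\nu$ and $\lambda$, consistent with the hypotheses of the lemma.
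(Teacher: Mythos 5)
Your proof is correct and follows essentially the same route as the paper: both test with $I_\nu(u_n)-\frac1q\langle I_\nu'(u_n),u_n\rangle$ to cancel the $\nu$-term, drop the nonnegative critical integral, and absorb the $L^2$-term into the logarithmic term via an elementary pointwise bound. Your inequality $t^2\le\varepsilon\,t^2\log t^2+C_\varepsilon$ is just a rearranged form of the paper's estimate $C_1 s+s\log s\ge -e^{-1-C_1}$ (based on $s\log s\ge -e^{-1}$), so the two arguments coincide in substance.
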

	
	\begin{proof}
		By the definition of $(PS)_c$ sequence and the facts that
		$s^2log s^2\ge -e^{-1}, \theta<0$ and for any $C_1\in \R, $
		\begin{equation}\label{h2.5}
		\begin{split}
		& C_1\int(u_{n})^2_{+}
		+ \int(u_{n})^2_{+}\log(u_{n})^2_{+}
		=e^{-C_1}\int e^{C_1}(u_{n})^2_{+}\log \big(e^{C_1}(u_{n})^2_{+}\big)
		\geq  -e^{-1-C_1}|\Omega|,
		\end{split}
		\end{equation}
		we have that, for $n$ large enough,
		\begin{equation}\label{3291}
		\begin{split}
		c+||u_n||+1&\ge I_\nu(u_n)-\frac{1}{q}<I'(u_n),u_n>\\
		&\geq\frac{q-2}{2q}\left\|u_{n}\right\|^2-\frac{(q-2)\lambda-q\theta}{2q}\int(u_{n})^2_{+}
		-\theta\frac{q-2}{2q}\int(u_{n})^2_{+}\log(u_{n})^2_{+}\\
		&\geq\frac{q-2}{2q}\left\|u_{n}\right\|^2-C|\Omega|,\\
		\end{split}
		\end{equation}
		which implies that  $\{u_n\}$ is bounded  in $H_0^1(\Omega).$
	\end{proof}

	\begin{lemma}\label{Lemma3} If  $c_\rho$ is attained   and    $\theta<0$, then  $-\infty<c_\eta<0.$
		
	\end{lemma}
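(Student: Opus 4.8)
The plan is to prove the two bounds on $c_\eta$ separately, the upper bound coming from the hypothesis that $c_\rho$ is attained and the lower bound being a re-use of the boundedness estimate from Lemma \ref{Lemma2}.

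\textbf{Upper bound $c_\eta<0$.} Suppose $c_\rho$ is attained at some $u_0$. I would first argue that $u_0$ lies strictly inside the ball $A$: if instead $\|u_0\|=\rho$, then Lemma \ref{Lemma1}(i) forces $I_\nu(u_0)\ge\alpha>0$, which contradicts $c_\rho<0$ from Lemma \ref{Lemma1-1}. Hence $u_0$ is an interior minimizer of the $C^1$ functional $I_\nu$, so it is an unconstrained local minimum; differentiating $t\mapsto I_\nu(u_0+tv)$ at $t=0$ for arbitrary $v\in H^1_0(\Omega)$ gives $I_\nu'(u_0)=0$, i.e. $u_0\in\eta$. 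In particular $\eta\neq\emptyset$, and $c_\eta\le I_\nu(u_0)=c_\rho<0$.

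\textbf{Lower bound $c_\eta>-\infty$.} For any $u\in\eta$ we have $I_\nu'(u)=0$, hence in particular $\langle I_\nu'(u),u\rangle=0$ and $I_\nu(u)=I_\nu(u)-\tfrac1q\langle I_\nu'(u),u\rangle$. I would then run the exact algebraic manipulation already performed in the proof of Lemma \ref{Lemma2}: the resulting coefficient of $\int|u_+|^{2^*}$ equals $-\mu\big(\tfrac1{2^*}-\tfrac1q\big)>0$ and may be dropped, while the remaining linear and logarithmic terms are bounded below using \eqref{h2.5} together with $\theta<0$. This produces a constant $C=C(q,\lambda,\theta)>0$, independent of $u$, such that $I_\nu(u)\ge\frac{q-2}{2q}\|u\|^2-C|\Omega|\ge -C|\Omega|$. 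Taking the infimum over $u\in\eta$ gives $c_\eta\ge -C|\Omega|>-\infty$. Together with the previous paragraph this yields $-\infty<c_\eta<0$.

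\textbf{Main obstacle.} There is no substantial difficulty; the only point that genuinely needs the hypotheses is the assertion that the minimizer realizing $c_\rho$ is \emph{interior} to $A$, so that it is an honest critical point of $I_\nu$ rather than merely a constrained minimizer — this is precisely where the mountain-pass geometry of Lemma \ref{Lemma1}(i) interacts with the strict inequality $c_\rho<0$. The lower bound is essentially a verbatim application of the Palais--Smale boundedness computation, since a critical point is trivially a $(PS)$ point.
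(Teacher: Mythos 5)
Your proposal is correct and follows essentially the same route as the paper: the upper bound $c_\eta\le c_\rho<0$ comes from observing that the minimizer $u_0\in A$ is an interior (hence free) critical point of $I_\nu$, and the lower bound comes from the identity $I_\nu(u)=I_\nu(u)-\tfrac1q\langle I_\nu'(u),u\rangle\ge\tfrac{q-2}{2q}\|u\|^2-C|\Omega|$ for $u\in\eta$, exactly as in the computation \eqref{3291} with \eqref{h2.5} and $\theta<0$. Your extra discussion ruling out $\|u_0\|=\rho$ is harmless but redundant, since $A$ is the open ball, so attainment already places $u_0$ in the interior.
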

	
	\begin{proof}
		Assume that $c_\rho$ is attained by $u_0$. We see that $u_0>0$, $I_\nu^\prime(u_0)=0$ and $I_\nu(u_0)=c_\rho<0$, which implies that
		$c_\eta\le I_\nu(u_0)=c_\rho<0.$ So $c_\eta<0.$

		Following  from $\theta<0$ and \eqref{3291}, we have that, for any $u\in \eta:= \{u\in H^1_0(\Omega):~I_\nu^\prime(u)=0\}$,
		\begin{equation}
		\begin{split}
		I_\nu(u)&= I_\nu(u)-\frac{1}{q}<I'(u),u>\geq\frac{q-2}{2q}\left\|u\right\|^2-C|\Omega|\geq -C|\Omega|>-\infty,
		\end{split}
		\end{equation}
		which implies that
		$c_\eta>-\infty.$
	\end{proof}

	\begin{lemma}\label{lmp3}
		Assume that $I_\nu^\prime(u)=0$ with $u>0$ and set $g(t):=I_\nu(tu), t>0$. Then $g(t)$ has at most two extreme points.
	\end{lemma}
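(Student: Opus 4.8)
The plan is to differentiate $g$ explicitly, peel off the trivial positive factor $t$, and thereby reduce counting the extreme points of $g$ to counting the zeros of a single auxiliary function, which I will show changes sign at most twice.

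First I would write $g$ in normalized form. Since $u>0$ we have $(tu)_+=tu$ for $t>0$, and expanding $\log(tu)^2=\log t^2+\log u^2$ in \eqref{fun1} yields
$$g(t)=\frac{a_1}{2}\,t^2-\frac{a_2}{2^*}\,t^{2^*}-\frac{a_3}{q}\,t^{q}-\frac{a_4}{2}\,t^2\log t^2,$$
where $a_2=\mu\int u^{2^*}>0$, $a_3=\nu\int u^{q}$, $a_4=\theta\int u^2<0$ (recall $\theta<0$ throughout the paper), and $a_1=\|u\|^2-\lambda\int u^2-\theta\int u^2(\log u^2-1)$. Using $\frac{d}{dt}\big(t^2\log t^2\big)=2t\log t^2+2t$ one gets
$$g'(t)=t\,h(t),\qquad h(t):=(a_1-a_4)-a_2\,t^{2^*-2}-a_3\,t^{q-2}-a_4\log t^2 .$$
Since $t>0$ on the domain of $g$, every extreme point of $g$ is a zero of $g'$, hence a zero of $h$, so it suffices to prove that $h$ has at most two zeros in $(0,\infty)$.

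The main step is the monotonicity analysis of $h$. Multiplying $h'$ by $t>0$ gives
$$\psi(t):=t\,h'(t)=-2a_4-a_3(q-2)\,t^{q-2}-a_2(2^*-2)\,t^{2^*-2},$$
with $\psi(0^+)=-2a_4>0$ and $\psi(t)\to-\infty$ as $t\to\infty$ (because $a_2>0$ and $2^*-2>0$). If $\nu\ge0$, then $a_3\ge0$, so $\psi'(t)=-a_3(q-2)^2t^{q-3}-a_2(2^*-2)^2t^{2^*-3}<0$, meaning $\psi$ is strictly decreasing and has exactly one zero. If $\nu<0$, then $a_3<0$ and $\psi'(t)=t^{q-3}\big(|a_3|(q-2)^2-a_2(2^*-2)^2\,t^{2^*-q}\big)$; since $2^*-q>0$, the bracket decreases strictly from a positive value to $-\infty$, so $\psi$ is first strictly increasing then strictly decreasing, and being positive at $0^+$ it also has exactly one zero. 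In either case $h'=\psi/t$ changes sign exactly once, from positive to negative, so $h$ is strictly increasing then strictly decreasing, hence has at most two zeros. Consequently $g'$ changes sign at most twice and $g$ has at most two extreme points.

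I expect the only mildly delicate point to be the sign analysis of $\psi'$ in the case $\nu<0$, where the subcritical power $t^{q}$ competes with the critical power $t^{2^*}$; but the strict ordering $q<2^*$ makes the bracket monotone and settles it. I would also note that the hypothesis $I_\nu'(u)=0$ is not actually used here — the conclusion holds for every positive $u$ — it is merely the situation in which the lemma is later invoked.
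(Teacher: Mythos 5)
Your proof is correct and takes essentially the same route as the paper: factor $g'(t)=t\,h(t)$ and show, by one more differentiation, that the remaining factor is strictly increasing and then strictly decreasing, hence has at most two zeros. The only differences are cosmetic: the paper uses the hypothesis $I_\nu'(u)=0$ merely to rewrite your constant $a_1-a_4$ as $\mu\int u^{2^*}+\nu\int u^{q}$, and it factors out $t^{q-3}$ (rather than multiplying $h'$ by $t$) so that the $\nu$-term becomes a constant and the sign analysis needs no case split on $\nu$; your remark that the hypothesis $I_\nu'(u)=0$ is not actually needed is accurate.
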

	
	\begin{proof}
		By direct computations, we have
		\begin{equation*}
		\begin{split}
		g^\prime(t)&=t\Big(|\nabla u |_2^2-\lambda |u |_2^2-\theta \int u ^2\ln u ^2-\nu\int|u|^{q}t^{q-2}-\mu\int|u |^{2^*}t^{2^*-2}-\theta \int u ^2\ln t^2\Big)\\
		&=t\Big[\mu\int|u |^{2^*}(1-t^{2^*-2})+\nu\int|u |^{q}(1-t^{q-2})-\theta \int u ^2\ln t^2\Big]=:g_1(t)t,
		\end{split}
		\end{equation*}
		
		$$g_1^\prime(t)=t^{q-3}[-2\theta \int u ^2t^{2-q}-(2^*-2)\mu\int|u |^{2^*} t^{2^*-q}-(q-2)\nu\int|u |^{q} ]=:g_2(t)t^{q-3}$$
		
		and
		$$g_2^\prime(t)=-2\theta(2-q) \int u ^2t^{1-q}-(2^*-2)(2^*-q)\mu\int|u |^{2^*} t^{2^*-q-1}<0,$$
		which, together with  $\mu>0$ and $\theta<0$, implies  that $g_1^\prime (t)=0$ has a unique root $t_3$, and $g_1^\prime (t)>0$ in $(0, t_3)$ and  $g_1^\prime (t)<0$ in $(  t_3, +\infty)$. Following from  $\lim\limits_{t\to 0^+}g_1(t)=\lim\limits_{t\to +\infty}g_1(t)=-\infty,$
		we have  that $g_1(t)=0$ has at most two roots.  So we have that $g^\prime(t)=0$ has at most two roots, i,e,  $g(t)$ has at most two extreme points.
	\end{proof}

	\begin{proof}[\textbf{The proof on $c_\rho$ of  Theorem \ref{buth1}:}]
		By Lemma \ref{Lemma1} and Lemma \ref{Lemma1-1}, we can take a minimizing
		sequence $\{u_n\}$ for $c_\rho $ with $|\nabla u_n|_2<\rho-\varepsilon$ and $\varepsilon>0$ small enough. By  Ekeland's variational principle(see \cite{Struwe}), we known that there exists a sequence $\{\tilde{u}_n\}\subset H^1_0(\Omega)$ such that $||u_n-\tilde{u}_n||\to 0$, $I_\nu(\tilde{u}_n)\to c_\rho$  and $I_\nu'(\tilde{u}_n)\to 0$. By Lemma \ref{Lemma2}, we can see that $\{\tilde{u}_n\}$ is bounded in $H_0^1(\Omega)$. Hence, passing to a subsequence, we may   assume that
		$$
		\begin{aligned}
		&\tilde{u}_n \rightharpoonup u \text { weakly in } H_0^1(\Omega),\\
		& \tilde{u}_n  \rightharpoonup u \text { weakly in } L^{2^*}(\Omega), \\
		& \tilde{u}_n  \rightarrow u \text { strongly in } L^m(\Omega) \text { for } 1\leq m<2^*, \\
		& \tilde{u}_n  \rightarrow u \text { almost everywhere in } \Omega .
		\end{aligned}
		$$
		By the weak lower semi-continuity of the norm, we see that $|\nabla u|_2\leq \rho-\varepsilon<\rho$. By $I_\nu^\prime(\tilde{u}_n)\to 0$, we have that  $I_\nu'(u)=0$ and $u\ge 0$.
		Letting  $\omega_n=\tilde{u}_n-u$, one gets that
		\begin{equation}\label{A.2}
		\begin{split}
		\int_\Omega|\nabla\omega_n|^2&=\mu|\omega^+_n|_{2^*}^{2^*}+o_n(1),\\
		I_\nu(\tilde{u}_n)=I_\nu(u)&+\frac{1}{N}\int_\Omega|\nabla\omega_n|^2+o_n(1).
		\end{split}
		\end{equation}
		Passing to a subsequence, we may assume that
		\begin{equation*}
		\int_\Omega|\nabla\omega_n|^2=\sigma+o_n(1).
		\end{equation*}
		Letting $n\to+\infty$ in \eqref{A.2}, we have
		$$
		c_\rho \leq I_\nu(u) \leq I_\nu(u)+\frac{1}{N} \sigma=\lim _{n \rightarrow \infty} I_\nu\left(\tilde{u}_n\right)=c_\rho,
		$$
		where we have used the fact that $|\nabla u|_2<\rho$ and which implies  that $\sigma=0$. Hence,   we obtain
		$$
		\tilde{u}_n \rightarrow u \text { strongly in } H_0^1(\Omega) .
		$$
		
		Since $I_\nu(u)=c_\rho<0$, we have $u \neq 0$. Then, by a similar argument as used to  the proof of  Theorem 2 in \cite{Deng}, we can get that $u>0$ and $u \in C^2(\Omega)$. This completes the proof.
	\end{proof}
	
	\begin{proof}[\textbf{The proof on $c_\eta$ of  Theorem \ref{buth1}:}]
		By Lemma \ref{Lemma3},  we have $-\infty<c_\eta<0$. Take a minimizing sequence $\left\{u_n\right\} \subset \eta$ for $c_\eta$. Then $I_\nu^{\prime}\left(u_n\right)=0$, $I_\nu\left(u_n\right)\to c_\eta$. By Lemma \ref{Lemma2}, we can see that $\left\{u_n\right\}$ is bounded in $H_0^1(\Omega)$. Hence, we may assume that
		$$
		\begin{aligned}
		&u_n \rightharpoonup u \text { weakly in } H_0^1(\Omega),\\
		& u_n \rightharpoonup u \text { weakly in } L^{2^*}(\Omega), \\
		& u_n \rightarrow u \text { strongly in } L^m(\Omega) \text { for } 1\leq m<2^*, \\
		& u_n \rightarrow u \text { almost everywhere in } \Omega .
		\end{aligned}
		$$
		
		So we have that  $I_\nu^{\prime}(u)=0$ and $u \geq 0$. Letting  $\omega_n=u_n-u$, one gets that
		\begin{equation}\label{A.3}
		\begin{split}
		\int_\Omega|\nabla\omega_n|^2=\mu\left|\omega_n^{+}\right|_{2^*}^{2^*}+o_n(1) . \\
		I_\nu\left(u_n\right)=I_\nu(u)+\frac{1}{N} \int_{\Omega}\left|\nabla \omega_n\right|^2+o_n(1) .
		\end{split}
		\end{equation}

		Passing to a subsequence, we may assume that
		$$
		\int_{\Omega}\left|\nabla \omega_n\right|^2=\sigma+o_n(1) .
		$$
		
		Letting $n \rightarrow+\infty$ in \eqref{A.3} and by $I_\nu^\prime(u)=0$, we have that
		$$
		c_\eta \leq I_\nu(u) \leq I_\nu(u)+\frac{1}{N} \sigma=\lim _{n \rightarrow \infty} I_\nu\left(u_n\right)=c_\eta ,
		$$
		which implies that $\sigma=0$. Hence we obtain
		$$
		u_n \rightarrow u \text { strongly in } H_0^1(\Omega) .
		$$
		
		Since $I_\nu(u)=c_\eta<0$,  we have that $u \neq 0$. By a similar argument as used in the proof of Theorem 1.2 in \cite{Deng},  we can show that $u>0$ and $u \in C^2(\Omega)$. We  complete the proof.
	\end{proof}

	\begin{proof}[\textbf{The proof of   Theorem \ref{buth3}:}]
		Assume that $c_\rho$ is attained by $u_0\in A:=\{u\in H^1_0(\Omega):|\nabla u|_2<\rho\}.$ Then $u_0\in \eta,$ which implies that
		\begin{equation}\label{hg3}
		c_\rho=I_\nu(u_0)\geq c_\eta.
		\end{equation}

		On the other hand, assuming  that $c_\eta$ is attained by some $u_1\in \eta,$ we have that $I_\nu(u_1)=c_\eta<0$, $I_\nu^\prime(u_1)=0$ and $u_1>0$ in $\Omega$.
		We consider the function $g(t):=I_\nu(tu_1), t>0.$ Using the Lemmas \ref{Lemma1}, \ref{lmp3} and the facts that $ g(t)<0$ for $t>0$ small enough and $\lim\limits_{t\to +\infty}g(t)=-\infty $,  we can see that $g(t)$ has only two extreme points $t_1, t_2\in (0, +\infty)$ with $t_1<t_2$.  At the same time, we also have that $t_1$ and $t_2$  are the  local minimum point  and the   maximum point of $g(t)$ respectively,  and  $g(t_2)>0$ and \begin{equation}\label{hg1}
		g(t_1)<g(t)<0~ \hbox{for~ any}~  t\in (0, t_1).
		\end{equation}
		Since $I_\nu(u_1)=c_\eta<0$ and $I_\nu^\prime(u_1)=0$, we see that $g(1)<0$  and $g^\prime(1)=0,$ which tells us that
		$1$ is a extreme point of $g(t)$ and $g(1)<0$. So $t_1=1$. According to \eqref{hg1}, we have that
		\begin{equation}\label{hg2}
		g(t)<0~ \hbox{for~ any}~  t\in (0, 1].
		\end{equation}
		We claim that $|\nabla u_1|_2 <\rho.$ In fact, if  $|\nabla u_1|_2 \geq \rho,$ we can find a $t_3\in (0, 1]$ such that $t _3|\nabla u_1|_2 =\rho,$ which, together with Lemma \ref{Lemma1}, implies that $g(t_3)=I_\nu(t_3u_1)\geq \delta>0,$ contradicting to \eqref{hg2}. Thus, $|\nabla u_1|_2 <\rho,$ which implies that $u_1\in A$ and
		\begin{equation}\label{hg4}
		c_\rho\leq I_\nu(u_1)=c_\eta.
		\end{equation}
		
		According to \eqref{hg3} and \eqref{hg4}, we obtain that $c_\rho=c_\eta.$
		We complete the proof.
	\end{proof}

	\section{Some important inequalities and energy estimates }

	In this Section, we will construct a $\gamma_0(t)\in \Gamma $  such that $c_M\leq \sup\limits_{t\in [0,1]}I_\nu(\gamma_0(t))<c_\eta+\frac{1}{N}\mu^{-\frac{N-2}{2}}S^\frac{N}{2}$. Without loss of generality, we may assume that $0\in \Omega$ and there exists a $r_0>0$  such that $B(0, 4r_0):=\{x\in \R^N: |x|\leq 4r_0\}\subset \Omega.$

	It is well-known  (see \cite{edm,guy,van})  that the following problem
	\begin{gather*}
	\begin{cases}
	-\Delta u=\left|u\right|^{2^\ast-2}u ,& x\in\mathbb{R}^N,\\
	\quad\:\,\,u>0,&\\
	\:\,u(0)=\max\limits_{x\in\mathbb{R}^N} u(x),&\\
	\end{cases}
	\end{gather*}
	up to dilations,  has a unique solution $\widetilde{u}(x)$ and
	\begin{equation*}
	{\widetilde{u}}(x)=\left[N(N-2)\right]^{\frac{N-2}{4}}\frac{1}{{(1+\left|x\right|^2)}^{\frac{N-2}{2}}}.
	\end{equation*}
	It is easy to check that
	\begin{equation*}
	u_{\frac{1}{n}}(x)=\left[N(N-2)\right]^{\frac{N-2}{4}}
	\left(\frac{\frac{1}{n}}{\frac{1}{n^2}+\left|x\right|^2}\right)^{\frac{N-2}{2}}=\left[N(N-2)\right]^{\frac{N-2}{4}}
	\left(\frac{n}{1+n^2\left|x\right|^2}\right)^{\frac{N-2}{2}}\\
	\end{equation*}
	is a minimizer for $S$.
	We define
	\begin{equation}\label{Un}
	U_n(x)=\left\{
	\begin{array}{ll}
	u_{\frac{1}{n}}(x), &0\leq |x|\leq r_0;\\
	\frac{1}{r_0}(2r_0-|x|)u_{\frac{1}{n}}(x), &r_0\leq |x|\leq 2r_0;\\
	0,  &|x|\geq 2r_0.
	\end{array}
	\right.
	\end{equation}

	By a direct computation, we obtain that,  as $n \rightarrow \infty$,
	$$
	\begin{gathered}
	\left\|\nabla U_n\right\|_2^2=\int_{\mathbb{R}^N}\left|\nabla U_n\right|^2 \mathrm{~d} x=S^{\frac{N}{2}}+O\left(\frac{1}{n^{N-2}}\right), \\
	\left\|U_n\right\|_{2^*}^{2^*}=S^{\frac{N}{2}}+O\left(\frac{1}{n^N}\right),
	\end{gathered}
	$$
	and

	$$
	\left\|U_n\right\|_p ^p = \begin{cases}O\left(\frac{1}{\left.n^{\min \left\{\frac{(N-2) p }{2}, N-\frac{(N-2) p }{2}\right\}}\right.}\right), & \text { if } p
	\in [1, 2^*)\setminus\{\frac{N}{N-2}\};
	\\ O\left(\frac{\ln n}{n^{\frac{N}{2}}}\right), & \text { if } p =\frac{N}{N-2}.\end{cases}
	$$
	
	In particular,
	$$
	\left\|U_n\right\|_2^2= \begin{cases}O\left(\frac{1}{n}\right), & \text { if } N=3 ; \\ O\left(\frac{\ln n}{n^2}\right), & \text { if } N=4; \quad \text { as } n \rightarrow \infty . \\ O\left(\frac{1}{n^2}\right), & \text { if } N \geq 5 ;\end{cases}
	$$
	and, for any $p \in [1, 2^*)$,
	$$
	\left\|U_n\right\|_p ^p \leq C\frac{1}{\left.n^{\min \left\{\frac{(N-2) p }{2}, N-\frac{(N-2) p }{2}\right\}}\right.}\ln n.$$
	
	Since $u_0$ is a positive ground state solution of \eqref{1.1} and $B(0, 2r_0)\subset B(0, 4r_0)\subset \Omega$, there exist $0<L_1\leq L_2<+\infty$ such that
	\begin{equation}\label{uv}
	L_1\leq u_0\leq L_2,~x\in B(0, 2r_0).
	\end{equation}

	\begin{lemma}\label{lm4.1}
		Assume that $0<C_1<C_2<+\infty$. Then, for any $\epsilon>0$,  there exits $  A_1>0$, dependent of $\epsilon$,  such that
		$$g(t,y)\leq   y^{2+\epsilon}+A_1y^2, ~~(t,y)\in [C_1, C_2]\times \R^+,$$
		where $g(t,y):=(t+y)^2\ln (t+y)^2-t^2\ln t^2-2ty(\ln t^2+1).$
	\end{lemma}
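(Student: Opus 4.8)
\textbf{Proof proposal for Lemma \ref{lm4.1}.}
The plan is to analyze $g(t,y)$ separately on a neighborhood of $y=0$ and on the region where $y$ is bounded away from $0$, exploiting the fact that $t$ ranges over the compact interval $[C_1,C_2]$. First I would compute, for fixed $t$, the Taylor expansion of $y\mapsto g(t,y)$ at $y=0$. Since
\[
g(t,y)=(t+y)^2\ln(t+y)^2-t^2\ln t^2-2ty(\ln t^2+1),
\]
and the map $y\mapsto (t+y)^2\ln(t+y)^2$ is smooth near $y=0$ (as $t\ge C_1>0$), the constant and linear terms in $y$ cancel by construction: the zeroth-order term is $t^2\ln t^2$ and the first derivative at $y=0$ is $2t\ln t^2 + 2t = 2t(\ln t^2+1)$, exactly the subtracted quantities. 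Hence $g(t,y)=\tfrac12 \partial_y^2 g(t,0)\,y^2 + o(y^2)$ with $\partial_y^2 g(t,0)=2\ln t^2+6$, which is bounded uniformly for $t\in[C_1,C_2]$. More precisely, by Taylor's theorem with integral remainder, $g(t,y)=y^2\big(\ln t^2+3\big)+y^2 R(t,y)$ where $R(t,y)\to 0$ as $y\to 0$ uniformly in $t\in[C_1,C_2]$ (uniform continuity of the second derivative on the compact set $[C_1,C_2]\times[0,1]$). So there is $\delta>0$ such that for $0\le y\le\delta$ and $t\in[C_1,C_2]$ we have $g(t,y)\le A_1' y^2$ for a constant $A_1'$ depending only on $C_1,C_2$; this already gives the bound on $[0,\delta]$ (the term $y^{2+\epsilon}\ge 0$ only helps).

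For the region $y\ge\delta$ I would bound $g(t,y)$ crudely. On $[C_1,C_2]\times\{y\ge\delta\}$ the two subtracted terms satisfy $|t^2\ln t^2+2ty(\ln t^2+1)|\le C(1+y)$ for a constant depending on $C_1,C_2$, so it suffices to dominate $(t+y)^2\ln(t+y)^2$. Using $t+y\le C_2+y$ and the elementary inequality $\ln s^2 \le \frac{2}{\epsilon}(s^\epsilon-1)\le \frac{2}{\epsilon}s^\epsilon$ for $s\ge 1$ (valid once $C_2+\delta\ge 1$; if not, enlarge nothing — just note $\ln(t+y)^2$ is bounded on the bounded part and grows only polynomially-with-$\epsilon$ on the unbounded part), we get $(t+y)^2\ln(t+y)^2\le \frac{2}{\epsilon}(C_2+y)^{2+\epsilon}$. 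Finally I would absorb the shift: $(C_2+y)^{2+\epsilon}\le (1+C_2/\delta)^{2+\epsilon} y^{2+\epsilon}$ for $y\ge\delta$, and the lower-order terms $C(1+y)\le C(1+1/\delta)\,y \le C' y^2/\delta$ are absorbed into $A_1 y^2$. Combining the two regions and taking $A_1$ to be the larger of the constants produced gives $g(t,y)\le y^{2+\epsilon}+A_1 y^2$ on all of $[C_1,C_2]\times\R^+$ (after, if necessary, rescaling the coefficient $\frac{2}{\epsilon}(1+C_2/\delta)^{2+\epsilon}$ down to $1$ in front of $y^{2+\epsilon}$ by enlarging $A_1$ — since for any constant $K$, $Ky^{2+\epsilon}\le y^{2+\epsilon}+C_K y^2$ by Young's inequality, as $2<2+\epsilon$).

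The main obstacle is purely bookkeeping: making sure the $o(y^2)$ remainder near $0$ is genuinely uniform in $t$ (this is where compactness of $[C_1,C_2]$ and continuity of $\partial_y^2 g$ are used) and that all shifts $(C_2+y)$ versus $y$ and all lower-order polynomial terms get cleanly absorbed into $A_1 y^2$ on the complementary region $y\ge\delta$. The dependence of $A_1$ on $\epsilon$ enters through $\frac{2}{\epsilon}$ and $(1+C_2/\delta)^{2+\epsilon}$, which is exactly the asserted dependence. No delicate estimate is needed — just the cancellation of the zeroth and first order terms (which is the content of how $g$ is defined) plus the sub-polynomial growth of the logarithm.
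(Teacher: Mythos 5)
Your small-$y$ analysis is correct and is essentially the paper's: the zeroth- and first-order terms cancel by construction, $\partial_y^2 g(t,0)=2\ln t^2+6$ is bounded for $t\in[C_1,C_2]$, and uniformity in $t$ gives $g(t,y)\le A_1' y^2$ for $0<y\le\delta$. The genuine gap is the last absorption step in the region $y\ge\delta$. After bounding $(t+y)^2\ln(t+y)^2\le \frac{2}{\epsilon}(C_2+y)^{2+\epsilon}\le K\,y^{2+\epsilon}$ with $K=\frac{2}{\epsilon}(1+C_2/\delta)^{2+\epsilon}>1$, you claim that $K\,y^{2+\epsilon}\le y^{2+\epsilon}+C_K\,y^2$ for all $y>0$ ``by Young's inequality, as $2<2+\epsilon$''. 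That inequality is false for $K>1$: dividing by $y^{2+\epsilon}$ gives $K\le 1+C_K\,y^{-\epsilon}$, which fails once $y$ is large. Young's inequality absorbs an \emph{intermediate} exponent between $2$ and $2+\epsilon$; here $2+\epsilon$ is the extreme exponent, so its coefficient cannot be rescaled down to $1$ by enlarging the $y^2$-term. As written, your argument does not produce the stated bound with coefficient $1$ in front of $y^{2+\epsilon}$.

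The repair is easy, in either of two ways. One: use the logarithm bound with a strictly smaller power, e.g. $\ln s^2\le \frac{4}{\epsilon}s^{\epsilon/2}$, so the dominating term becomes $K\,y^{2+\epsilon/2}$ with $2<2+\epsilon/2<2+\epsilon$, and then genuinely $K\,y^{2+\epsilon/2}\le y^{2+\epsilon}+K^2 y^2$ (split according to $y^{\epsilon/2}\ge K$ or $y^{\epsilon/2}\le K$). Two: argue as the paper does, namely that $\ln(t+y)^2=o(y^{\epsilon})$ uniformly for $t\in[C_1,C_2]$, hence $g(t,y)/y^{2+\epsilon}\to 0$ as $y\to+\infty$ and $g(t,y)\le y^{2+\epsilon}$ outright for $y\ge X_0$; the remaining compact strip $[C_1,C_2]\times[\delta,X_0]$ is then handled by continuity, its supremum being bounded by $A_1\delta^2\le A_1y^2$ for suitably large $A_1$. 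With either fix your proof has the same structure as the paper's (near-zero Taylor expansion, sub-polynomial growth of the logarithm at infinity, compactness in between).
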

	\begin{proof}
		For any $t\in [C_1, C_2]$, we have that, as $y\to +\infty,$
		$$ \frac{|g(t,y)|}{y^{2+\epsilon}}\leq \frac{C(C_2+y)^{2-\epsilon}+C(C_2+y)^{2+\frac{\epsilon}{2}}+C+Cy}{y^{2+\epsilon}} \to 0 .$$
		So there exists $X_0>0$ such that
		\begin{equation}\label{lme1}
		g(t,y)\leq y^{2+\epsilon},~\forall ~y\geq X_0 ~\hbox{and}~t\in   [C_1, C_2].
		\end{equation}
		
		On the other hand, we have that, for any $t\in   [C_1, C_2]$, as $y\to 0^+$,
		\begin{equation*}
		\begin{split}
		\frac{g(t,y)}{y^2}&=\frac{2(t+y)^2(\ln t+\ln (1+\frac{y}{t}) -t^2\ln t^2-2ty(\ln t^2+1)}{y^2}\\
		&=\frac{2(t+y)^2(\ln t+\frac{y}{t}-\frac{y^2}{2t^2}+O(\frac{y^3}{t^3})) -t^2\ln t^2-2ty(\ln t^2+1)}{y^2}\\
		&=\ln t^2 +\frac{2(t+y)^2(\frac{y}{t}-\frac{y^2}{2t^2}+O(\frac{y^3}{t^3}))  -2ty}{y^2} \\
		&=\Big[\ln t^2+3)+ O(\frac{y}{t}+\frac{y^2}{t^2}+ \frac{y^3}{t^3})\Big]  \\
		&\leq \big[\ln C_2^2+3)+ O(y)+O(y^2)+ O(y^3)\big]
		\end{split}
		\end{equation*}
		
		which implies that there exists $X_1>0$ such that
		\begin{equation}\label{lme2}
		g(t,y)\leq (4+\ln C_2^2)y^2,~0<y\leq X_1~\hbox{and}~t\in   [C_1, C_2].
		\end{equation}
		At the same time, it is easy to see that $\sup\limits_{(t,y)\in \Lambda} g(t,y)$
		is  attained, where $ \Lambda ={[C_1, C_2]\times [X_1, X_0]}$. So there exists a positive constant  $A_1>4+\ln C_2^2$ such that when $(t,y)\in \Lambda,$
		$$g(t,y)\leq \sup\limits_{(t,y)\in \Lambda} g(t,y)\leq A_1X_1^2\leq A_1y^2,$$
		which, together with \eqref{lme1} and \eqref{lme2}, implies that
		$$g(t,y)\leq y^{2+\epsilon}+A_1y^2$$
		for any $(t, y)\in [C_1, C_2]\times \R^+.$
		We complete the proof.
	\end{proof}

	\begin{lemma}\label{lm4.2}Assume that   $0<C_1<C_2<+\infty$. Then there exists a constant  $A_2,\hat{A}_2>0$ such that, for $p\in (3, +\infty)$,
		$$f(p,t,y)
		\geq \frac{1}{2}pC_1 y^{p-1}-A_2y^2, ~~(t,y)\in [C_1, C_2]\times \R^+,$$
		and, for $p\in (2,+\infty)$,
		$$|f(p,t,y)| \leq \frac{1}{2}p^2C_2^{p-2}y^2+\hat{A}_2C_2y^{p-1}, ~~(t,y)\in [C_1, C_2]\times \R^+,$$
		where $f(p,t,y) =(t+y)^p -t^p -y^p-pt^{p-1}y.$
	\end{lemma}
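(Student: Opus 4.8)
The plan is to reduce both estimates to the behaviour of a single universal one‑variable profile. First I would record the sign information: for $p>2$ and fixed $t>0$, the function $h(y):=f(p,t,y)$ satisfies $h(0)=0$, $h'(0)=pt^{p-1}-pt^{p-1}=0$ (using $y^{p-1}\to 0$ as $y\to 0^+$), and $h''(y)=p(p-1)\bigl[(t+y)^{p-2}-y^{p-2}\bigr]\ge 0$ because $x\mapsto x^{p-2}$ is nondecreasing for $p\ge 2$; hence $h$ is convex with a critical point at the origin, so $f(p,t,y)\ge 0$ on $[C_1,C_2]\times\R^+$. In particular $|f(p,t,y)|=f(p,t,y)$, and since both claimed inequalities are trivial when $y=0$, I may assume $y>0$ and write $f(p,t,y)=t^p\phi(y/t)$ with $\phi(s):=(1+s)^p-1-s^p-ps$, $s\ge 0$.

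For the upper bound I would analyse $\phi$ near $0$ and near $\infty$. A second‑order Taylor expansion gives $\phi(s)=\tfrac{p(p-1)}{2}s^2+o(s^2)$ as $s\to 0^+$ (here $\phi(0)=\phi'(0)=0$ and $\phi''(0)=p(p-1)$), and since $\tfrac{p(p-1)}{2}<\tfrac{p^2}{2}$ there is $s_1>0$ with $\phi(s)\le\tfrac{p^2}{2}s^2$ on $(0,s_1]$; meanwhile $(1+s)^p-s^p=ps^{p-1}+O(s^{p-2})$ as $s\to\infty$ forces $\phi(s)/s^{p-1}\to p$, so $\hat A_2:=\sup_{s\ge s_1}\phi(s)/s^{p-1}<\infty$ and $\phi(s)\le\hat A_2 s^{p-1}$ on $[s_1,\infty)$. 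Combining, $\phi(s)\le\tfrac{p^2}{2}s^2+\hat A_2 s^{p-1}$ for all $s\ge 0$, so $f(p,t,y)=t^p\phi(y/t)\le\tfrac{p^2}{2}t^{p-2}y^2+\hat A_2\,t\,y^{p-1}$, and bounding $t^{p-2}\le C_2^{p-2}$ and $t\le C_2$ (valid since $p>2$) yields the claim.

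For the lower bound, with $p>3$, I would split $\R^+$ into three ranges, uniformly in $t\in[C_1,C_2]$. On $(0,1]$: since $p-3>0$ one has $\tfrac12 pC_1y^{p-1}\le\tfrac12 pC_1y^2$, hence $\tfrac12 pC_1y^{p-1}-A_2y^2\le 0\le f(p,t,y)$ as soon as $A_2\ge\tfrac12 pC_1$. On $[Y_0,\infty)$ for a suitably large $Y_0$: Bernoulli's inequality gives $(t+y)^p\ge y^p+pty^{p-1}$, so $f(p,t,y)\ge pty^{p-1}-t^p-pt^{p-1}y\ge pC_1y^{p-1}-C_2^p-pC_2^{p-1}y$, and since $p-1>1$ the term $\tfrac12 pC_1y^{p-1}$ eventually dominates $C_2^p+pC_2^{p-1}y$, so for $y\ge Y_0$ (a threshold depending only on $p,C_1,C_2$) we get $f(p,t,y)\ge\tfrac12 pC_1y^{p-1}\ge\tfrac12 pC_1y^{p-1}-A_2y^2$. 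On the compact set $[C_1,C_2]\times[1,Y_0]$ the continuous function $f(p,t,y)-\tfrac12 pC_1y^{p-1}$ is bounded below by some $-M$, and $y^2\ge 1$ there, so $f(p,t,y)-\tfrac12 pC_1y^{p-1}\ge -My^2$. Choosing $A_2:=\max\{\tfrac12 pC_1,\,M\}$ completes the proof.

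The delicate point is getting the sharp leading constant $\tfrac12 p^2C_2^{p-2}$ in the upper bound: it is precisely the slack $\tfrac{p^2}{2}-\tfrac{p(p-1)}{2}=\tfrac p2$ that absorbs the higher‑order part of $\phi$ near $s=0$ (indeed, for $p=3$ one has the identity $\phi(s)\equiv 3s^2$), and a careless expansion would give only $\tfrac{p(p-1)}{2}$ or a non‑explicit constant. For the lower bound the corresponding care is in keeping the factor $\tfrac12$ in front of $pC_1y^{p-1}$ while still having room for the error $-t^p-pt^{p-1}y$ at large $y$ and for the $-A_2y^2$ correction at small $y$, which is exactly where the hypothesis $p>3$ enters. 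I would also note explicitly that $A_2$ and $\hat A_2$ are allowed to depend on $p$ (besides $C_1,C_2$), which suffices since $p$ is fixed — equal to $2^*$ or $2^*-1$ — in the later applications.
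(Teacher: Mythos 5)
Your proof is correct, and while your upper bound follows essentially the paper's route, your lower bound takes a genuinely different one. The paper works entirely on the normalized profile $\tilde f(y)=(1+y)^p-1-y^p-py$ and argues asymptotically: it shows $\tilde f(y)/y^{p-1}\to p$ as $y\to+\infty$ and $(\tilde f(y)-\tfrac12 py^{p-1})/y^2\to\tfrac12 p(p-1)$ as $y\to 0^+$ (this quotient is where $p>3$ enters there, via $y^{p-3}\to 0$), covers the middle range by compactness, and scales back through $f(p,t,y)=t^p\tilde f(y/t)$; the $p>2$ upper bound is handled the same way with $|\tilde f|$, which is exactly your treatment of $\phi$. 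For the lower bound you instead first prove the structural fact $f\ge 0$ from $\partial_y^2 f=p(p-1)\bigl[(t+y)^{p-2}-y^{p-2}\bigr]\ge 0$ together with $f=\partial_y f=0$ at $y=0$; this both justifies replacing $|f|$ by $f$ (a point the paper treats only implicitly through its limit computations) and kills the range $y\in(0,1]$ via $y^{p-1}\le y^2$, which is where your use of $p>3$ (in fact $p\ge 3$) replaces the paper's Taylor expansion at $0$. At large $y$ you use Bernoulli's inequality $(t+y)^p\ge y^p+pty^{p-1}$ to get an explicit bound uniform in $t\in[C_1,C_2]$ without normalizing, and you only need compactness on $[C_1,C_2]\times[1,Y_0]$. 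The paper's expansion yields a slightly stronger local statement near $y=0$ (an extra $\tfrac14 p(p-1)y^2$ surplus) that is never used; your version is more elementary and makes the uniformity in $t$ transparent, while producing the same constants $\tfrac12 pC_1$ and $\tfrac12 p^2C_2^{p-2}$ required by the statement, with $A_2,\hat A_2$ depending on $p,C_1,C_2$, which is all the later applications need.
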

	\begin{proof}
		Set $\tilde{f}(y) =(1+y)^p -1 -y^p-py.$

		\textbf{Case 1:}~$p\in (3, +\infty)$
		
		It is easy to see that, as $y\to +\infty,$
		\begin{equation}\label{lme2.1}
		\begin{split}
		\frac{\tilde{f}(y)}{y^{p-1}}&=\frac{(1+y)^p   -y^p}{y^{p-1}}+o(1)=y\Big[(1+\frac{1}{y})^p-1\Big]+o(1)=p+o(1),
		\end{split}
		\end{equation}
		which implies that
		there exists $X_2>0$ such that
		\begin{equation}\label{lme2.2}
		\tilde{f}(y)\geq \frac{1}{2}p y^{p-1}~\hbox{for~any}~y\in   [X_2, +\infty).
		\end{equation}
		
		On the other hand, we obtain that, as $y\to 0^+$,
		\begin{equation}\label{lme2.3}
		\begin{split}
		\frac{\tilde{f}(y)-\frac{1}{2}p  y^{p-1}}{y^2}&=\frac{ (1+y)^p -1  -py}{y^2}+o(1)\\
		&= \frac{1}{2}\frac{\partial^2 (1+y)^p}{\partial y^2}\Big|_{y=0}+o(1)\\
		&=\frac{1}{2}p(p-1)+o(1),
		\end{split}
		\end{equation}
		which implies that there exists $X_3>0$ such that, for any $y\in  (0, X_3]$,
		$$\tilde{f}(y)-\frac{1}{2}p  y^{p-1}\geq \frac{1}{4}p(p-1)y^2 .$$ That is,
		\begin{equation}\label{lme12.4}
		\tilde{f}(y)\geq \frac{1}{2}p  y^{p-1}+\frac{1}{4}p(p-1)y^2, y\in  (0, X_3].
		\end{equation}
		Since $\tilde{f}(y)-\frac{1}{2}p  y^{p-1}$ is continuous in $ [X_3, X_2]$, we can find a constant $A_2>0$ such that,
		for any $y\in [X_3, X_2]$,
		$$\tilde{f}(y)-\frac{1}{2}p  y^{p-1}\geq \inf\limits_{y\in [X_3, X_2]}[\tilde{f}(y)-\frac{1}{2}p  y^{p-1}]\geq-A_2C_2^{2-p}X_3^2\geq -A_2C_2^{2-p}y^2,$$
		which, together with \eqref{lme2.2} and \eqref{lme12.4}, implies that
		$$\tilde{f}(y)\geq \frac{1}{2}p  y^{p-1}-A_2C_2^{2-p}y^2$$
		for any $y\in (0, +\infty).$
		It is easy to see that $f(p, t,y)=t^p\tilde{f}(\frac{y}{t}).$
		So for $(t,y)\in [C_1, C_2]\times \R^+$,
		$$f(p, t,y)=t^p\tilde{f}(\frac{y}{t})\geq \frac{1}{2}p  ty^{p-1}-A_2C_2^{2-p}t^{p-2}y^2\geq \frac{1}{2}p  C_1y^{p-1}-A_2y^2.$$
		
		\textbf{Case 2:}~$p\in (2, +\infty)$
		
		One can see that,   as $y\to 0^+$,
		\begin{equation}
		\begin{split}
		\Big|\frac{\tilde{f}(y)}{y^2}\Big|&=\Big|\frac{ (1+y)^p -1  -py}{y^2}+o(1)\Big|= \frac{1}{2}\frac{\partial^2 (1+y)^p}{\partial y^2}\Big|_{y=0}+o(1)=\frac{1}{2}p(p-1)+o(1).
		\end{split}
		\end{equation}
		So there exists a $\hat{X}_2>0$ such that
		\begin{equation}\label{bulme12.4}
		|\tilde{f}(y)| \leq \frac{1}{2}p^2y^2, y\in  (0, \hat{X}_2].
		\end{equation}

		On the other hand, we have that, as $y\to +\infty,$
		\begin{equation}
		\begin{split}
		\frac{|\tilde{f}(y)|}{y^{p-1}}&=\Big|\frac{(1+y)^p   -y^p}{y^{p-1}}+o(1)\Big|=y\Big[(1+\frac{1}{y})^p-1\Big]+o(1)=p+o(1),
		\end{split}
		\end{equation}
		which means  that
		there exists $\hat{X}_3>0$ such that
		\begin{equation}\label{bulme2.2}
		|\tilde{f}(y)|\leq 2p y^{p-1}~\hbox{for~any}~y\in   [\hat{X}_3, +\infty).
		\end{equation}
		Since $|\tilde{f}(y)|$ is continuous in $[\hat{X}_2, \hat{X}_3],$ we have that, for any $y\in  [\hat{X}_2, \hat{X}_3],$
		$$|\tilde{f}(y)|\leq \max\limits_{y\in[\hat{X}_2, \hat{X}_3]} |\tilde{f}(y)|\leq C\leq C\min\limits_{y\in[\hat{X}_2, \hat{X}_3]}y^{p-1}\leq Cy^{p-1},$$
		which, together with \eqref{bulme12.4} and \eqref{bulme2.2}, implies that there exists a $\hat{A}_2>0$ large enough such that
		$$|\tilde{f}(y)|\leq \frac{1}{2}p^2y^2+\hat{A}_2y^{p-1}, ~y\in \R^+.$$
		So
		$$|f(p, t,y)|=|t^p\tilde{f}(\frac{y}{t})|\leq \frac{1}{2}p^2t^{p-2}y^2+\hat{A}_2ty^{p-1}\leq   \frac{1}{2}p^2C_2^{p-2}y^2+\hat{A}_2C_2y^{p-1},~y\in \R^+.$$

		This completes the proof.
	\end{proof}

	\begin{lemma}\label{lm4.3}Assume that $m\in (2, +\infty)$ and  $0<C_1<C_2<+\infty$. Then there exists a constant  $A_3>0$ large enough  such that
		$$2 y^m+A_3y^2\geq f_1(m, t,y)\geq \frac{1}{2}   y^m-A_3y^2, ~~(t,y)\in [C_1, C_2]\times \R^+,$$
		where $f_1(m, t,y) =(t+y)^m -t^m -mt^{m-1}y.$
	\end{lemma}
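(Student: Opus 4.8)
The plan is to reduce everything to the scalar function $\tilde f_1(y):=(1+y)^m-1-my$ exactly as in Lemma~\ref{lm4.2}, using the homogeneity identity $f_1(m,t,y)=t^m\tilde f_1(y/t)$. Once this is done, it suffices to prove that there is a constant $A_3>0$ with
\begin{equation*}
\tfrac12\,y^m-A_3 y^2\;\le\;\tilde f_1(y)\;\le\;2\,y^m+A_3 y^2,\qquad y\in\R^+,
\end{equation*}
after which the bounds for $f_1(m,t,y)$ on $[C_1,C_2]\times\R^+$ follow by multiplying through by $t^m$ and absorbing the powers $t^m,t^{m-2}\in[C_1^{m},C_2^{m}]\cup[C_1^{m-2},C_2^{m-2}]$ into the constant (the constant $A_3$ at the end being taken large enough to dominate $C_2^{m-2}A_3$ and small enough relative to the lower bound via $C_1^{m-2}$, exactly the bookkeeping carried out at the end of the proof of Lemma~\ref{lm4.2}).

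For the scalar inequality I would split $\R^+$ into three regions and use the same asymptotic-comparison argument as in Lemmas~\ref{lm4.1} and~\ref{lm4.2}. First, as $y\to0^+$, Taylor expansion gives $\tilde f_1(y)=\tfrac12 m(m-1)y^2+o(y^2)$, so $\tilde f_1(y)/y^2\to\tfrac12 m(m-1)$; since $y^m=o(y^2)$ near $0$ (as $m>2$), this yields constants so that on some $(0,\delta_1]$ one has $\tfrac12 y^m-A_3y^2\le\tilde f_1(y)\le 2y^m+A_3y^2$ provided $A_3>\tfrac12 m(m-1)$. Second, as $y\to+\infty$, $\tilde f_1(y)/y^m=(1+1/y)^m-1/y^{m-... }\to1$, more precisely $\tilde f_1(y)=y^m+my^{m-1}+o(y^{m-1})$, so $\tilde f_1(y)/y^m\to1$, which gives $\tfrac12 y^m\le\tilde f_1(y)\le 2y^m$ on some $[R_1,+\infty)$. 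Third, on the compact interval $[\delta_1,R_1]$ the continuous function $\tilde f_1$ is bounded, and since $y^2$ is bounded below by $\delta_1^2>0$ there, one can enlarge $A_3$ so that $|\tilde f_1(y)|\le A_3 y^2\le A_3y^2+$ (trivial terms) and also $\tilde f_1(y)\ge -A_3 y^2$; similarly $\tilde f_1(y)\le 2y^m+A_3y^2$ holds on the compact piece by boundedness. Taking $A_3$ to be the maximum of the finitely many constants produced in the three regions finishes the scalar estimate.

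The only mild subtlety — and the step I expect to require the most care — is checking the \emph{lower} bound near $y=0$: because $y^m=o(y^2)$, the term $\tfrac12 y^m$ on the left is negligible and the real content is $\tilde f_1(y)\ge -A_3 y^2$, which is immediate from $\tilde f_1(y)\ge 0$ for $y\ge0$ (convexity of $t\mapsto(1+t)^m$ when $m>1$, so the function lies above its tangent line $1+my$). In fact convexity gives $\tilde f_1(y)\ge0$ on all of $\R^+$, which makes the lower bound $\tilde f_1(y)\ge\tfrac12 y^m-A_3y^2$ easy away from infinity and reduces the genuine work to the large-$y$ regime, handled by the asymptotics above. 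After transporting back via $f_1(m,t,y)=t^m\tilde f_1(y/t)$ and adjusting the constant for the range of $t$, the proof is complete. The structure is essentially identical to that of Lemma~\ref{lm4.2}, Case~2, with the extra observation that $f_1$ has no $-y^m$ term, which is precisely why the clean two-sided bound $\tfrac12 y^m\le\tilde f_1(y)\le 2y^m$ is available at infinity.
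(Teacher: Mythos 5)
Your proof is correct and follows essentially the same route as the paper's: reduce to $\tilde f_1(y)=(1+y)^m-1-my$ via the homogeneity $f_1(m,t,y)=t^m\tilde f_1(y/t)$, establish the two-sided bound near $y=0$ (Taylor expansion, with convexity making the lower bound trivial there), at infinity (where $\tilde f_1(y)/y^m\to1$ gives $\tfrac12 y^m\le\tilde f_1(y)\le 2y^m$), and on the intermediate compact interval by continuity, then absorb the factor $t^{m-2}\le C_2^{m-2}$ into the constant. The only (harmless) imprecision is the remark about choosing the constant ``via $C_1^{m-2}$'': no appeal to $C_1$ is needed, since the $\tfrac12 y^m$ and $2y^m$ terms transfer exactly under the homogeneity and only the $y^2$-coefficient picks up the factor $t^{m-2}\le C_2^{m-2}$.
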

	\begin{proof}
		Set $\tilde{f}_1(y) =(1+y)^m -1 -m y.$ Then
		it is easy to see that
		\begin{equation}\label{lme3.1}
		\lim\limits_{y\to+\infty}\frac{\tilde{f}_1(y)}{y^{m}}=1,
		\end{equation}
		which implies that
		there exists $X_4>0$ such that
		\begin{equation}\label{lme13.2}
		2y^m\geq \tilde{f}_1(y)\geq \frac{1}{2}  y^m~\hbox{for~any}~y\in   [X_4, +\infty).
		\end{equation}
		
		On the other hand, we obtain that, as $y\to 0^+$,
		\begin{equation}\label{lme2.3}
		\begin{split}
		\frac{\tilde{f}_1(y)-\frac{1}{2} y^m}{y^2}&=\frac{ (1+y)^m -1  -my}{y^2}+o(1)\\
		&= \frac{1}{2}\frac{\partial^2 (1+y)^m}{\partial y^2}\Big|_{y=0}+o(1)\\
		&=\frac{1}{2}m(m-1)+o(1),
		\end{split}
		\end{equation}
		which implies that there exists $X_5>0$ such that, for any $y\in  (0, X_5]$,
		$$\frac{1}{2}m^2y^2\geq \tilde{f}_1(y)-\frac{1}{2}y^m\geq \frac{1}{4}m(m-1)y^2 .$$
		That is,
		\begin{equation}\label{lme2.4}
		Cy^2\geq \frac{1}{2}m^2y^2+\frac{1}{2}y^m\geq \tilde{f}_1(y)\geq \frac{1}{2}   y^m+\frac{1}{4}m(m-1)y^2, y\in  (0, X_5],
		\end{equation}
		since $m>2.$
		Since $\tilde{f}_1(y)$ and  $\tilde{f}_1(y)-\frac{1}{2}   y^m$ are  continuous in $ [X_5, X_4]$, we can find a constant $A_3>0$ large enough such that,
		for any $y\in [X_5, X_4]$,
		$$\tilde{f}_1(y)-\frac{1}{2}   y^m\geq \inf\limits_{y\in [X_5, X_4]}[\tilde{f}_1(y)-\frac{1}{2}   y^m]\geq-A_3C_2^{2-m}X_5^2\geq -A_3C_2^{2-m}y^2,$$
		and
		$$\tilde{f}_1(y)\leq \max\limits_{y\in [X_5, X_4]}[\tilde{f}_1(y)]\leq A_3C_2^{2-m}X_5\leq  A_3C_2^{2-m}y^2,$$
		which, together with \eqref{lme13.2} and \eqref{lme2.4}, implies that
		$$2 y^m+A_3C_2^{2-m}y^2\geq \tilde{f}_1(y)\geq \frac{1}{2}   y^m-A_3C_2^{2-m}y^2$$
		for any $y\in (0, +\infty).$
		It is easy to see that $f_1(m, t, y)=t^m\tilde{f}_1(\frac{y}{t}).$
		So for $(t,y)\in [C_1, C_2]\times \R^+$,
		$$f_1(m, t, y)=t^m\tilde{f}_1(\frac{y}{t})\geq \frac{1}{2}   y^m-A_3C_2^{2-m}t^{m-2}y^2\geq \frac{1}{2}  y^m-A_3y^2,$$
		and
		$$f_1(m, t, y)=t^m\tilde{f}_1(\frac{y}{t})\leq 2   y^m+A_3C_2^{2-m}t^{m-2}y^2\leq 2  y^m+A_3y^2.$$
		This completes the proof.
	\end{proof}

	\begin{corollary}\label{c1}
		For any $\epsilon>0$, there exist $B_1, B_2, B_3>0$ such that, for any $t_n\in (0, +\infty)$,
		$$g(u_0, t_nU_n)\leq (t_nU_n)^{2+\epsilon}+B_1(t_nU_n)^2,~\forall x\in \Omega,$$
		$$f(2^*, u_0, t_nU_n)\geq \frac{2^*}{2}L_1(t_nU_n)^{2^*-1}-B_2(t_nU_n)^2, ~\forall x\in \Omega,N=3,4,5,$$
		$$|f(2^*, u_0, t_nU_n)|\leq \frac{(2^*)^2}{2}L^{2^*-2}_2(t_nU_n)^2+B_2L_2(t_nU_n)^{2^*-1}, ~\forall x\in \Omega, N\geq 3,$$
		and
		$$2 (t_nU_n)^q+B_3(t_nU_n)^2\geq f_1(q,u_0, t_nU_n)\geq \frac{1}{2}(t_nU_n)^q-B_3(t_nU_n)^2, ~~\forall  x\in \Omega,$$
		where $L_1$ and $L_2$ are  given in \eqref{uv}, and  $g(t,y), f(p,t, y)$ and $f_1(m,t, y)$ are defined in Lemmas \ref{lm4.1}, \ref{lm4.2} and \ref{lm4.3}, respectively.
	\end{corollary}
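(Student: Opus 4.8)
The plan is to derive all four pointwise bounds from Lemmas \ref{lm4.1}, \ref{lm4.2} and \ref{lm4.3}, applied with the concrete choice of endpoints $C_1 = L_1$ and $C_2 = L_2$, where $L_1, L_2$ are the constants from \eqref{uv}. First I would split $\Omega$ according to the support of $U_n$, which is contained in $\overline{B(0,2r_0)}$. On the complement $\Omega \setminus \overline{B(0,2r_0)}$ one has $t_n U_n(x) = 0$, and a direct check gives $g(t,0) = 0$, $f(p,t,0) = 0$ and $f_1(m,t,0) = 0$ for every $t > 0$; since each right-hand side in the corollary also vanishes when its second argument is $0$, all four inequalities hold there with equality, and it remains only to deal with $x \in \overline{B(0,2r_0)}$.

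On $\overline{B(0,2r_0)}$ the estimate \eqref{uv} squeezes $u_0(x)$ between $L_1$ and $L_2$, so for every such $x$ the pair $(t,y) = (u_0(x),\, t_n U_n(x))$ lies in $[L_1,L_2] \times \R^+$ and one may substitute it into the three lemmas. Lemma \ref{lm4.1} with $(C_1,C_2) = (L_1,L_2)$ and the given $\epsilon$ produces $A_1 > 0$ with $g(u_0,t_n U_n) \le (t_n U_n)^{2+\epsilon} + A_1 (t_n U_n)^2$, so one takes $B_1 := A_1$. Since $q \in (2,2^*) \subset (2,+\infty)$, Lemma \ref{lm4.3} with $m = q$ and $(C_1,C_2) = (L_1,L_2)$ gives $A_3 > 0$ with $2(t_n U_n)^q + A_3 (t_n U_n)^2 \ge f_1(q,u_0,t_n U_n) \ge \tfrac12 (t_n U_n)^q - A_3 (t_n U_n)^2$, so one takes $B_3 := A_3$. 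For the remaining two inequalities I would invoke Lemma \ref{lm4.2} with $p = 2^*$ and $(C_1,C_2) = (L_1,L_2)$: its lower-bound part needs $p \in (3,+\infty)$, i.e. $2^* = \tfrac{2N}{N-2} > 3$, equivalently $N \le 5$, which is exactly why the second bound of the corollary is restricted to $N = 3,4,5$; its upper-bound part needs only $p \in (2,+\infty)$ and so is valid for all $N \ge 3$. Writing $A_2$ and $\hat A_2$ for the two constants thus obtained, setting $B_2 := \max\{A_2,\hat A_2\}$, and using $\tfrac12 p C_1 = \tfrac{2^*}{2} L_1$ together with $\tfrac12 p^2 C_2^{p-2} = \tfrac{(2^*)^2}{2} L_2^{2^*-2}$, yields $f(2^*,u_0,t_n U_n) \ge \tfrac{2^*}{2} L_1 (t_n U_n)^{2^*-1} - B_2 (t_n U_n)^2$ for $N=3,4,5$ and $|f(2^*,u_0,t_n U_n)| \le \tfrac{(2^*)^2}{2} L_2^{2^*-2} (t_n U_n)^2 + B_2 L_2 (t_n U_n)^{2^*-1}$ for $N\ge 3$.

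I do not anticipate any genuine analytic difficulty: the entire content of the corollary has been packed into Lemmas \ref{lm4.1}--\ref{lm4.3}, and what is left is simply to evaluate them at $t = u_0(x)$, $y = t_n U_n(x)$ on the region where \eqref{uv} controls $u_0$, the inequalities for all $t_n \in (0,+\infty)$ being automatic since $t_n U_n(x) \in \R^+$. The two places that require a modicum of attention are the behaviour on the set $\{U_n = 0\}$, handled above via $g(\cdot,0) = f(\cdot,0) = f_1(\cdot,0) = 0$, and the exponent bookkeeping for $2^*$, which forces $N \le 5$ precisely in the one inequality relying on the lower bound of Lemma \ref{lm4.2}.
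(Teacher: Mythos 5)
Your proposal is correct and follows essentially the same route as the paper: split $\Omega$ according to the support of $U_n$, note that $g$, $f$, $f_1$ all vanish where $U_n=0$, and on $B(0,2r_0)$ apply Lemmas \ref{lm4.1}--\ref{lm4.3} with $(C_1,C_2)=(L_1,L_2)$, $t=u_0(x)$, $y=t_nU_n(x)$. Your extra bookkeeping (the explicit choices $B_1=A_1$, $B_2=\max\{A_2,\hat A_2\}$, $B_3=A_3$, and the observation that the lower bound for $f(2^*,\cdot,\cdot)$ requires $2^*>3$, i.e.\ $N\le 5$) is exactly what the paper leaves implicit.
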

	\begin{proof}
		When $x\in \Omega\setminus B(0, 2r_0)$, we have that $U_n(x)=0$. So it is easy to see that $g(u_0, t_nU_n)=f(2^*, u_0, t_nU_n)=f_1(q,u_0, t_nU_n)=0, x\in \Omega\setminus B(0, 2r_0),$
		which implies that the conclusions hold for $x\in \Omega\setminus B(0, 2r_0)$.
		
		Let $t=u_0$ and $y=t_nU_n$.
		If $x\in  B(0, 2r_0)$, then $t=u_0\in [L_1, L_2]$. Thus, by Lemmas \ref{lm4.1}, \ref{lm4.2} and \ref{lm4.3}, one can see that the conclusions hold for $x\in   B(0, 2r_0).$
		We complete the proof.
	\end{proof}

	\begin{lemma}\label{lm5.1}Assume that $\nu\in\R$ and $q\in   (2, 2^*)$. Then there exist $N_0>0$ and $ T>4\rho/S^\frac{N}{4}$, dependent of $N_0$,  such that when $n\geq N_0$,
		$$I_\nu(u_0+TU_n)<I_\nu(u_0)<0.$$
	\end{lemma}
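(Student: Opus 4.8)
The goal of Lemma \ref{lm5.1} is to show that for $n$ large, the functional $I_\nu$ becomes negative (and in fact drops below $I_\nu(u_0)$) along the ray $u_0 + tU_n$ once $t$ reaches some fixed large value $T$, where $T$ can be chosen bigger than $4\rho/S^{N/4}$. The natural approach is to expand $I_\nu(u_0 + tU_n)$ explicitly in powers of $t$ and exploit that the coefficient of the highest power, $t^{2^*}$, is strictly negative (it is essentially $-\frac{\mu}{2^*}\int U_n^{2^*}$, which by the energy estimates is bounded away from $0$ uniformly in $n$), so that as $t \to +\infty$ the $t^{2^*}$ term dominates all other terms and pushes the energy to $-\infty$. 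The only subtlety is that we need this blow-up to happen \emph{uniformly in $n$}, i.e. we must find one threshold $T$ that works for every $n \geq N_0$.

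First I would write out
\[
I_\nu(u_0 + tU_n) = I_\nu(u_0) + t\,\langle I_\nu'(u_0), U_n\rangle + (\text{remainder}),
\]
using $\langle I_\nu'(u_0),U_n\rangle = 0$ since $u_0$ is a critical point; actually, because $u_0 + tU_n \geq 0$ where $U_n \neq 0$ (both are nonnegative), the $u_+$ truncations disappear on $B(0,2r_0)$ and one can work with the genuine nonlinearity. Grouping terms, the remainder splits into: the gradient cross term $t\int \nabla u_0\cdot\nabla U_n$ plus $\tfrac{t^2}{2}\int|\nabla U_n|^2$; the critical term, which via $f(2^*,u_0,tU_n)$ (Corollary \ref{c1}) contains the dominant $-\frac{\mu}{2^*}t^{2^*}\int U_n^{2^*}$ piece plus lower-order $t^{2^*-1}$ and $t^2$ pieces; the $\nu|u|^q$ term, controlled through $f_1(q,u_0,tU_n)$; the linear term $-\frac{\lambda}{2}$ (or the shifted version); and the logarithmic term, controlled through $g(u_0,tU_n)$, which by Corollary \ref{c1} is bounded above by $(tU_n)^{2+\epsilon} + B_1(tU_n)^2$ for any small $\epsilon>0$ (choose $\epsilon$ so that $2+\epsilon < 2^*$, possible since $N\geq 3$). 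Integrating and using the estimates $\|U_n\|_{2^*}^{2^*} = S^{N/2}+o(1)$, $\|\nabla U_n\|_2^2 = S^{N/2}+O(n^{-(N-2)})$, $\|U_n\|_p^p \leq C n^{-\min\{\frac{(N-2)p}{2},\,N-\frac{(N-2)p}{2}\}}\ln n \to 0$ for $p\in[1,2^*)$, and $\int\nabla u_0\cdot\nabla U_n = \int(\mu u_0^{2^*-1}+\cdots)U_n \to 0$ (all the integrals against $U_n$ of fixed powers tend to $0$), I obtain an estimate of the shape
\[
I_\nu(u_0+tU_n) \leq I_\nu(u_0) + \tfrac{t^2}{2}\bigl(S^{N/2}+o(1)\bigr) + C t^{2^*-1}\cdot o(1) + C t^{2+\epsilon}\cdot o(1) + Ct^2\cdot o(1) - \tfrac{\mu}{2^*}t^{2^*}\bigl(S^{N/2}+o(1)\bigr),
\]
where every $o(1)$ is uniform once $n\geq N_0$ and all the stray coefficients can be absorbed.

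Now the argument concludes by a uniform comparison: pick $N_0$ so that for $n\geq N_0$ the quantities $S^{N/2}+o(1)$ all lie in $[\tfrac12 S^{N/2}, 2S^{N/2}]$ and the $o(1)$'s multiplying $t^{2^*-1}, t^{2+\epsilon}, t^2$ are $\leq 1$. Then the right-hand side is $\leq I_\nu(u_0) + h(t)$ where $h(t) := S^{N/2}t^2 + t^{2^*-1} + t^{2+\epsilon} + t^2 - \tfrac{\mu}{2^*\cdot 2}S^{N/2}t^{2^*}$ is a \emph{fixed} function of $t$ (independent of $n$), and since $2^* > 2^*-1, 2+\epsilon, 2$, we have $h(t)\to -\infty$ as $t\to+\infty$; in particular there is a fixed $T$, which we may take as large as we like and in particular $> 4\rho/S^{N/4}$, with $h(T) < 0$. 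Then $I_\nu(u_0 + TU_n) \leq I_\nu(u_0) + h(T) < I_\nu(u_0) < 0$ for all $n \geq N_0$, which is the claim.

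The main obstacle is purely bookkeeping: making sure that the cross terms produced by expanding the logarithmic term around $u_0$ (the $g(u_0,tU_n)$ business), the critical term, and the subcritical term are \emph{all} genuinely lower order in $t$ than $t^{2^*}$ \emph{and} genuinely $o(1)$ in the $U_n$-integrals, so that the threshold $T$ truly does not depend on $n$. The role of Corollary \ref{c1} is precisely to give upper bounds of the form $(tU_n)^{2+\epsilon}+B(tU_n)^2$ and $Ct^{2^*-1}U_n^{2^*-1}+Ct^2U_n^2$ with constants depending only on $L_1, L_2$ (hence only on $u_0$, not on $n$), so that after integration one gets clean powers of $t$ times $\|U_n\|_p^p$-type quantities. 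One must also remember the linear cross term $t\langle\nabla u_0,\nabla U_n\rangle$: since $-\Delta u_0$ equals the (bounded, on $B(0,2r_0)$) right-hand side of the equation, this integral is $O(\|U_n\|_1) = o(1)$, so it too is harmless. Once these are in hand the conclusion is immediate from the dominance of the $-t^{2^*}$ term.
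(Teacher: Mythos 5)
Your proposal is correct: the mechanism you use (the coefficient of $t^{2^*}$ is negative and bounded away from $0$ uniformly in $n$, all other terms carry strictly lower powers of $t$ with $n$-uniformly bounded coefficients, hence one fixed $T>4\rho/S^{N/4}$ works for all $n\geq N_0$) is exactly the mechanism the paper relies on, and your bookkeeping via Corollary \ref{c1} and the identity $\langle I_\nu'(u_0),U_n\rangle=0$ closes all the gaps. The difference is one of economy: the paper's own proof of this lemma does not expand around $u_0$ at all and never invokes Corollary \ref{c1} or the fact that $u_0$ is a critical point. It simply uses the crude bounds $\tfrac12\int|\nabla(u_0+tU_n)|^2\leq\int|\nabla u_0|^2+t^2\int|\nabla U_n|^2$, drops the critical term down to $-\tfrac{\mu}{2^*}t^{2^*}\int U_n^{2^*}$ via $u_0>0$, and absorbs the $\nu$-, $\lambda$- and logarithmic terms into $C\int(u_0+tU_n)^q+C\int(u_0+tU_n)^{2-\epsilon}$; then it only needs two-sided uniform bounds $\int|\nabla U_n|^2\leq 2S^{N/2}$, $\int U_n^{2^*}\geq\tfrac12 S^{N/2}$ and boundedness (not smallness) of the subcritical norms of $U_n$ to get a fixed function of $t$ tending to $-\infty$. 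Your finer decomposition, with the $o(1)$ control of the cross terms, is what the paper reserves for the delicate energy comparisons in Lemmas \ref{lm5.3} and \ref{lm5.4}; here it buys you the sharper intermediate bound $I_\nu(u_0)+h(t)$, but for the qualitative statement of Lemma \ref{lm5.1} the rougher route suffices and is shorter. Both arguments are valid.
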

	\begin{proof}
		By direct computations, we have that, for $n$ large enough,
		\begin{equation*}
		\begin{split}
		&I_\nu(u_0+tU_n)\\
		&=\frac{1}{2}\int |\nabla (u_0+tU_n)|^2-\frac{\mu}{2^*}\int |u_0+tU_n|^{2^*}-\frac{\nu}{q}\int |u_0+tU_n|^q-\frac{\lambda}{2}\int (u_0+tU_n)^2\\
		&
		-\frac{\theta}{2}\int(u_0+tU_n)^2(\ln(u_0+tU_n)^2-1)\\
		&\leq \int|\nabla u_0|^2 +\int|\nabla U_n|^2t^2 -\frac{\mu}{2^*}\int|U_n|^{2^*} t^{2^*}
		+C\int(u_0+tU_n)^q +C\int(u_0+tU_n)^{2-\epsilon}\\
		&\leq C + \int|\nabla U_n|^2 t^2-\frac{\mu}{2^*}\int|U_n|^{2^*} t^{2^*}
		+C\int U_n ^q t^q+C\int U_n ^{2-\epsilon} t^{2-\epsilon}\\
		&\leq C+(2S^\frac{N}{2})t^2+Ct^q+Ct^{2-\epsilon}-\frac{\mu}{2^*}(\frac{1}{2}S^\frac{N}{2})t^{2^*}\\
		&\to -\infty,~\hbox{as}~t\to +\infty,
		\end{split}
		\end{equation*}
		where $0<\epsilon<1$ and which implies that there exist $N_0>0$ and $ T>4\rho/S^\frac{N}{4}$, dependent of $N_0$, such that when $n\geq N_0$,
		$$I_\nu(u_0+TU_n)<I_\nu(u_0)<0.$$
		We complete the proof.
		
	\end{proof}

	\begin{lemma}\label{lm5.2}
		For any $n$ large enough, there exists a $t_n\in (0, T)$ such that
		$$I_\nu(u_0+t_nU_n)=\max\limits_{t\in [0, T]}I_\nu(u_0+tU_n)\geq \delta$$
		and
		$$0<  \inf\limits_{n}t_n\leq t_n\leq T.$$
	\end{lemma}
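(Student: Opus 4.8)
The plan is to study the continuous function $h_n(t):=I_\nu(u_0+tU_n)$ on the compact interval $[0,T]$, where $T$ and the threshold index are those produced by Lemma \ref{lm5.1}. Since $h_n$ is continuous on $[0,T]$, its maximum is attained at some $t_n\in[0,T]$; the real work is to (a) show the maximal value exceeds a constant $\delta>0$ that is independent of $n$, which simultaneously forces $t_n$ into the open interval, and (b) show $\inf_n t_n>0$.

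For (a) I would exploit the Mountain pass geometry established in Lemma \ref{Lemma1}. Since $u_0\in A$ we have $\|u_0\|<\rho$, while $\|U_n\|^2=S^{N/2}+O(n^{-(N-2)})$ gives $\|U_n\|\ge\frac12 S^{N/4}$ for $n$ large; combining this with $T>4\rho/S^{N/4}$ and the triangle inequality,
\[
\|u_0+TU_n\|\ \ge\ T\|U_n\|-\|u_0\|\ >\ \tfrac12 T S^{N/4}-\rho\ >\ 2\rho-\rho\ =\ \rho .
\]
Thus the path $t\mapsto u_0+tU_n$ starts strictly inside the sphere $\{\|v\|=\rho\}$ and ends strictly outside it, so by the intermediate value theorem applied to the continuous map $t\mapsto\|u_0+tU_n\|$ there is $\bar t_n\in(0,T)$ with $\|u_0+\bar t_nU_n\|=\rho$. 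By Lemma \ref{Lemma1}$(i)$, $h_n(\bar t_n)=I_\nu(u_0+\bar t_nU_n)\ge\alpha$, hence $\max_{[0,T]}h_n\ge\alpha=:\delta>0$. Because $h_n(0)=I_\nu(u_0)=c_\rho<0$ and, by Lemma \ref{lm5.1}, $h_n(T)<I_\nu(u_0)<0$, neither endpoint can be a maximizer, so the maximum is attained at some $t_n\in(0,T)$ with $h_n(t_n)\ge\delta$; this gives the first display of the statement together with $t_n<T$.

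For (b) I argue by contradiction: if $\inf_n t_n=0$, then along a subsequence $t_{n_k}\to 0$. Since $\|U_n\|$ is bounded, $\|t_{n_k}U_{n_k}\|=t_{n_k}\|U_{n_k}\|\to 0$, so $u_0+t_{n_k}U_{n_k}\to u_0$ strongly in $H_0^1(\Omega)$, and the continuity of $I_\nu$ on $H_0^1(\Omega)$ yields $h_{n_k}(t_{n_k})\to I_\nu(u_0)=c_\rho<0$, contradicting $h_{n_k}(t_{n_k})\ge\delta>0$. Hence $\inf_n t_n>0$, which finishes the proof.

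The only delicate point is that $\delta$ must be independent of $n$: this is exactly what the $n$-independent Mountain pass level $\alpha$ of Lemma \ref{Lemma1} and the $n$-independent bound $\|U_n\|\ge\frac12 S^{N/4}$ provide, everything else being elementary. One should also keep in mind that $I_\nu$ is genuinely continuous on $H_0^1(\Omega)$ despite the logarithmic term, which follows from the estimate $|s^2\log s^2|\le C_\epsilon(|s|^{2-\epsilon}+|s|^{2+\epsilon})$ together with the Sobolev embedding and dominated convergence, as has already been used implicitly above.
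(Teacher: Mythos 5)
Your proposal is correct and follows essentially the same route as the paper: use $T>4\rho/S^{N/4}$, $\|u_0\|<\rho$ and the intermediate value theorem to find a point on the sphere $\{\|v\|=\rho\}$ where Lemma \ref{Lemma1} gives $I_\nu\ge\alpha=\delta>0$, note both endpoints have negative energy by Lemma \ref{lm5.1}, and rule out $\inf_n t_n=0$ by continuity of $I_\nu$ since $t_nU_n\to0$ in $H_0^1(\Omega)$ would force $I_\nu(u_0+t_nU_n)\to I_\nu(u_0)<0$, contradicting $\ge\delta$. Your write-up merely makes explicit a few points the paper leaves implicit (the $n$-independence of $\delta$ and the continuity of the logarithmic term).
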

	
	\begin{proof}
		It follows from  $T>4 \rho /S^\frac{N}{4}$ and $||u_0||<  \rho $ that, for any $n$ large enough,
		$$||u_0+TU_n||\geq T||U_n||-||u_0||>\rho,$$
		which, together with $||u_0||<  \rho $, implies that there exists $t_{n,0}\in (0, T]$ such that
		$||u_0+t_{n,0}U_n||=\rho$
		and
		$$I_\nu(u_0+t_{n,0}U_n)\geq \delta>0>\max\{I_\nu(u_0), I_\nu(u_0+TU_n)\}.$$
		So   there exists $t_n\in (0, T)$ such that
		$$I_\nu(u_0+t_nU_n)=\max\limits_{t\in [0, T]}I_\nu(u_0+tU_n)\geq \delta.$$
		
		We have known that $t_n\leq T$. So we only need to prove that $\inf\limits_{n}t_n> 0$. Assuming that $t_n\to0$, then
		$||u_0+t_nU_n||^2 \to ||u_0||^2$ as $n\to +\infty,$
		which implies that
		$$0<\delta\leq \lim\limits_{n\to+\infty}I_\nu(u_0+t_nU_n)=I_\nu(u_0)<0,$$
		leading to a contradiction. Thus $\inf\limits_{n}t_n> 0$.
		This completes the proof.
	\end{proof}
	
	Set $$\gamma_0(t):=u_0+tTU_n, t\in [0,1].$$ It is easy to see that $\gamma_0(t)\in \Gamma $ and $c_M\leq \sup\limits_{t\in [0,1]}I_\nu(\gamma_0(t))\leq I_\nu(u_0+t_nU_n).$

	\begin{lemma}\label{lm5.3}Assume that $3\leq N\leq 5,$   $\nu\leq 0$ and $ q\in(2,2^*-1)$. Then there exists $N_1\geq N_0$ such that when $n\geq N_1,$
		$$c_M\leq I_\nu(u_0+t_nU_n)<c_\eta+\frac{1}{N}\mu^{-\frac{N-2}{2}}S^{\frac{N}{2}}.$$
	\end{lemma}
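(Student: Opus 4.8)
The plan is to estimate $I_\nu(u_0 + t_n U_n)$ from above by expanding each term in the functional around $u_0$, exploiting that $I_\nu'(u_0)=0$ so that all first-order (linear in $U_n$) contributions cancel, and that $I_\nu(u_0)=c_\rho=c_\eta$ by Theorem \ref{buth3}. Concretely, I would write
$$
I_\nu(u_0+t_nU_n)=I_\nu(u_0)+\frac{t_n^2}{2}\|U_n\|^2-\frac{\mu}{2^*}\int f(2^*,u_0,t_nU_n)-\frac{\nu}{q}\int f_1(q,u_0,t_nU_n)-\frac{\theta}{2}\int g(u_0,t_nU_n)+\big(\text{terms that vanish by }I_\nu'(u_0)=0\big),
$$
where $f$, $f_1$, $g$ are exactly the quantities controlled in Lemmas \ref{lm4.1}--\ref{lm4.3} and Corollary \ref{c1}. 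Using Corollary \ref{c1} with a small $\epsilon>0$, the $g$-term is bounded above by $C\int(t_nU_n)^{2+\epsilon}+C\int(t_nU_n)^2$, the $f_1$-term (with $\nu\le 0$, so $-\frac{\nu}{q}\ge 0$) is bounded using the upper bound $2(t_nU_n)^q+B_3(t_nU_n)^2$, and the $f(2^*,\cdot)$-term is bounded below by $\frac{2^*}{2}L_1(t_nU_n)^{2^*-1}-B_2(t_nU_n)^2$, so that $-\frac{\mu}{2^*}\int f(2^*,\cdot)\le -\frac{\mu L_1}{2}\int(t_nU_n)^{2^*-1}+C\int(t_nU_n)^2$. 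Collecting everything and writing $h(t):=\frac{t^2}{2}\|U_n\|^2-\frac{\mu}{2^*}t^{2^*}\|U_n\|_{2^*}^{2^*}$, I get
$$
I_\nu(u_0+t_nU_n)\le c_\eta+\max_{t\ge 0}h(t)-c\,t_n^{2^*-1}\!\int U_n^{2^*-1}+C\!\int U_n^{2+\epsilon}+C\!\int U_n^q+C\!\int U_n^2,
$$
and $\max_{t\ge 0}h(t)=\frac{1}{N}\big(\|U_n\|^2\big)^{N/2}\big(\mu\|U_n\|_{2^*}^{2^*}\big)^{-(N-2)/2}=\frac{1}{N}\mu^{-(N-2)/2}S^{N/2}+O(n^{-(N-2)})$ by the standard energy expansions recalled before Lemma \ref{lm4.1}.

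The decisive point is then a comparison of the \emph{gain} term $-c\,t_n^{2^*-1}\int U_n^{2^*-1}$ against the \emph{loss} terms $C\int U_n^{2+\epsilon}+C\int U_n^q+C\int U_n^2$. Since $\inf_n t_n>0$ by Lemma \ref{lm5.2}, the gain is of order $\int U_n^{2^*-1}=O(n^{-(N-2)/2})$ (this is the key rate, and it is why $N\le 5$ is needed: for $N\le 5$ one has $\frac{N-2}{2}\le\frac{3}{2}<2$, so $\int U_n^{2^*-1}$ dominates $\int U_n^2$, whereas for $N\ge 6$ it does not). For the loss terms one checks, from the displayed $\|U_n\|_p^p$ asymptotics, that $\int U_n^2$, $\int U_n^q$ (using $q<2^*-1$, hence $\frac{(N-2)q}{2}>\frac{N-2}{2}$, i.e. the rate for $\int U_n^q$ is faster than $n^{-(N-2)/2}$), and $\int U_n^{2+\epsilon}$ for $\epsilon$ small (so $2+\epsilon$ is still close enough to $2$ to keep the rate faster than or, for $N=3$, comparable but with adjustable constant) are all $o\big(\int U_n^{2^*-1}\big)$ or can be absorbed. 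Hence for $n$ large the net correction is strictly negative, giving $I_\nu(u_0+t_nU_n)<c_\eta+\frac{1}{N}\mu^{-(N-2)/2}S^{N/2}$, and since $c_M\le\sup_{t\in[0,1]}I_\nu(\gamma_0(t))\le I_\nu(u_0+t_nU_n)$ by the remark after Lemma \ref{lm5.2}, the claim follows.

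The main obstacle I anticipate is the bookkeeping of exponents in low dimensions: one must verify case by case ($N=3,4,5$) that each of $\int U_n^2$, $\int U_n^q$, $\int U_n^{2+\epsilon}$ decays at least as fast as $\int U_n^{2^*-1}=O(n^{-(N-2)/2})$, and in the borderline situations (e.g. $N=3$, where $\int U_n^2=O(n^{-1})$ exactly matches $\int U_n^{2^*-1}=O(n^{-1/2})$? — no: $n^{-1}=o(n^{-1/2})$, so it is fine; but $\int U_n^{2+\epsilon}$ must be checked carefully, and $\epsilon$ chosen accordingly) one needs the freedom to shrink $\epsilon$ and to use that the \emph{constant} in front of the gain term, $\frac{\mu L_1}{2}t_n^{2^*-1}$ with $t_n$ bounded below, is a fixed positive number independent of $n$. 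The constraint $q\in(2,2^*-1)$ and the restriction $N\le 5$ are precisely what make these inequalities work, so I would flag at the outset exactly where each hypothesis enters, then carry out the three-dimensional-case and the $N=4,5$ cases in parallel using the unified bound $\|U_n\|_p^p\le C n^{-\min\{(N-2)p/2,\,N-(N-2)p/2\}}\ln n$ recalled in the text.
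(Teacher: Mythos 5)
Your proposal is correct and follows essentially the same route as the paper: expand $I_\nu(u_0+t_nU_n)$ around the solution $u_0$ (linear terms cancelling via $I_\nu'(u_0)=0$, with $I_\nu(u_0)=c_\rho=c_\eta$ by Theorem \ref{buth3}), control the remainder terms by Corollary \ref{c1}, bound the quadratic-critical part by $\frac{1}{N}\mu^{-\frac{N-2}{2}}S^{\frac{N}{2}}+O(n^{2-N})$, and beat it with the gain $\int U_n^{2^*-1}=O(n^{-\frac{N-2}{2}})$ (using $\inf_n t_n>0$, $q<2^*-1$ and $3\le N\le 5$), exactly as in \eqref{lme5.32}. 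The only cosmetic difference is that the paper takes $\epsilon=q-2$ in Corollary \ref{c1} so the logarithmic remainder merges with the $U_n^q$ term, whereas you keep a small $\epsilon$ and check it separately; both choices yield the required decay.
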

	\begin{proof}For any  $\nu\leq 0$ and $q\in(2,2^*-1)$,  direct computations implies  that, for $n$ large enough,
		\begin{equation}\label{lme5.32}
		\begin{aligned}
		&I_\nu(u_0+t_nU_n)\\
		&=I_\nu(u_0)+\frac{1}{2}\int|\nabla U_n|^2 t_n^2-\frac{\mu}{2^*}\int(|u_0+t_nU_n|^{2^*}-u_0^{2^*}-2^*u_0^{2^*-1}t_nU_n)\\
		&-\frac{\nu}{q}\int(|u_0+t_nU_n|^{q}-u_0^{q}-qu_0^{q-1}t_nU_n)\\
		&-\frac{\lambda-\theta}{2}\int U_n^2t_n^2-\frac{\theta}{2}\int\Big[(u_0+t_nU_n)^2\ln(u_0+t_nU_n)^2-u_0^2\ln u_0^2-2u_0t_nU_n(\ln u_0^2+1)\Big]\\
		&=c_\eta  +\frac{1}{2}\int|\nabla U_n|^2 t_n^2-\frac{\mu}{2^*}\int|t_nU_n|^{2^*}-\frac{\mu}{2^*}\int f(2^*, u_0, t_nU_n)\\
		&-\frac{\nu}{q}\int f_1(q, u_0, t_nU_n)-\frac{\lambda-\theta}{2}\int U_n^2t_n^2-\frac{\theta}{2}\int g(u_0, t_nU_n)\\
		&\leq c_\eta  +\frac{1}{2}S^{\frac{N}{2}} t_n^2-\frac{\mu}{2^*}S^{\frac{N}{2}}t_n^{2^*}+O\Big(\frac{1}{n^{N-2}}\Big)-\frac{L_1\mu}{2}\int |t_nU_n|^{2^*-1}+C\int |t_nU_n|^{q}+C\int |t_nU_n|^{2}\\
		&=c_\eta  +\frac{1}{N}\mu^{-\frac{N-2}{2}}S^{\frac{N}{2}} +O\Big(\frac{1}{n^{N-2}}\Big)-\frac{L_1\mu}{2}\frac{1}{n^{\frac{N-2}{2}}}
		+O\Big(\frac{\ln n}{n^{\min\{\frac{(N-2)q}{2}, N-\frac{(N-2)q}{2}\}}}\Big)
		+C|U_n|_2^2\\
		&<c_\eta  +\frac{1}{N}\mu^{-\frac{N-2}{N}}S^{\frac{N}{2}},
		\end{aligned}
		\end{equation}
		where $\epsilon=q-2$   and we have used the Corollary \ref{c1} and the facts that $\min\{\frac{(N-2)q}{2}, N-\frac{(N-2)q}{2}\}>\frac{N-2}{2}$ and
		$$
		\left\|U_n\right\|_2^2= \begin{cases}O\left(\frac{1}{n}\right), & \text { if } N=3 ; \\ O\left(\frac{\ln n}{n^2}\right), & \text { if } N=4; \quad \text { as } n \rightarrow \infty . \\ O\left(\frac{1}{n^2}\right), & \text { if } N \geq 5 ;\end{cases}
		$$
		We complete the proof.
	\end{proof}
	\begin{lemma}\label{lm5.4} Assume that $N\geq 3.$  If   $\nu>0$ and $ q\in (2, 2^*)$,  then there exists $N_2\geq N_0$ such that when $n\geq N_2,$
		$$c_M\leq I_\nu(u_0+t_nU_n)<c_\eta +\frac{1}{N}\mu^{-\frac{N-2}{2}}S^{\frac{N}{2}}.$$
	\end{lemma}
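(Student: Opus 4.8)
The plan is to run the expansion used in the proof of Lemma \ref{lm5.3} and to exploit that, for $\nu>0$, the contribution of the subcritical power $|u|^{q-2}u$ now carries a \emph{favourable} sign. Since $u_0$ is a positive ground state with $I_\nu(u_0)=c_\rho=c_\eta$ (Theorem \ref{buth3}) and $\langle I_\nu'(u_0),U_n\rangle=0$ kills all terms linear in $U_n$, one obtains, exactly as in Lemma \ref{lm5.3},
\begin{equation*}
\begin{split}
I_\nu(u_0+t_nU_n)=c_\eta&+\frac{t_n^2}{2}\int|\nabla U_n|^2-\frac{\mu}{2^*}\int|t_nU_n|^{2^*}-\frac{\mu}{2^*}\int f(2^*,u_0,t_nU_n)\\
&-\frac{\nu}{q}\int f_1(q,u_0,t_nU_n)-\frac{\lambda-\theta}{2}\int(t_nU_n)^2-\frac{\theta}{2}\int g(u_0,t_nU_n).
\end{split}
\end{equation*}
Maximizing $t\mapsto\frac{t^2}{2}\|\nabla U_n\|_2^2-\frac{\mu}{2^*}t^{2^*}\|U_n\|_{2^*}^{2^*}$ and inserting the energy expansions of $\|\nabla U_n\|_2^2$ and $\|U_n\|_{2^*}^{2^*}$ bounds the first three terms by $c_\eta+\frac{1}{N}\mu^{-\frac{N-2}{2}}S^{\frac N2}+O(n^{-(N-2)})$. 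The new point is that Corollary \ref{c1} together with $\inf_n t_n>0$ (Lemma \ref{lm5.2}) gives
\[
-\frac{\nu}{q}\int f_1(q,u_0,t_nU_n)\le-\frac{\nu}{2q}t_n^q\|U_n\|_q^q+\frac{\nu B_3}{q}t_n^2\|U_n\|_2^2\le -c\,n^{-\beta_q}+C\|U_n\|_2^2
\]
for some $c>0$, where $\beta_q:=\min\{\frac{(N-2)q}{2},\,N-\frac{(N-2)q}{2}\}$ (if $q=\frac{N}{N-2}$, which forces $N\in\{3,4\}$, there is an extra harmless $\ln n$). Thus the $\|U_n\|_q^q$ term, which in Lemma \ref{lm5.3} was a \emph{positive} error forcing $q<2^*-1$, is here a genuine negative term and may simply be kept or discarded as convenient.

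I would first record the elementary fact that $\beta_q<2$ for every $q\in(2,2^*)$, since $N-\frac{(N-2)q}{2}<2\iff q>2$. Then I split into two cases. For $3\le N\le5$ I repeat the argument of Lemma \ref{lm5.3} almost verbatim, using the lower bound $f(2^*,u_0,t_nU_n)\ge\frac{2^*}{2}L_1(t_nU_n)^{2^*-1}-B_2(t_nU_n)^2$ of Corollary \ref{c1} to create the dominant negative term $-\frac{L_1\mu}{2}t_n^{2^*-1}\|U_n\|_{2^*-1}^{2^*-1}\asymp-c\,n^{-\frac{N-2}{2}}$; discarding the $\|U_n\|_q^q$ term, the remaining positive errors are $O(n^{-(N-2)})$, $O(\|U_n\|_2^2)$ and $O(\|U_n\|_{2+\epsilon}^{2+\epsilon})$ with $\epsilon\in(0,\tfrac13)$, all of which are $o(n^{-\frac{N-2}{2}})$ for $N\le5$ exactly as in Lemma \ref{lm5.3} — but now, the $q$-term being negative, no upper restriction on $q$ is needed. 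For $N\ge6$ the $L_1$-trick fails (one has $n^{-\frac{N-2}{2}}=O(n^{-2})$, which can no longer beat $\|U_n\|_2^2\asymp n^{-2}$), so instead I keep the negative term $-c\,n^{-\beta_q}$ and estimate $f(2^*,u_0,t_nU_n)$ only through $|f(2^*,u_0,t_nU_n)|\le\frac{(2^*)^2}{2}L_2^{2^*-2}(t_nU_n)^2+B_2L_2(t_nU_n)^{2^*-1}$. The full list of positive errors is then $O(n^{-(N-2)})$, $O(\|U_n\|_2^2)=O(n^{-2})$, $O(\|U_n\|_{2^*-1}^{2^*-1})=O(n^{-\frac{N-2}{2}})$ and $O(\|U_n\|_{2+\epsilon}^{2+\epsilon})=O(n^{-(2-\frac{(N-2)\epsilon}{2})})$; since $N-2\ge4$, $\frac{N-2}{2}\ge2$ and $\beta_q<2$, choosing $\epsilon<\frac{2(2-\beta_q)}{N-2}$ makes $n^{-\beta_q}$ strictly the slowest-decaying term, so $-c\,n^{-\beta_q}$ wins and $I_\nu(u_0+t_nU_n)<c_\eta+\frac{1}{N}\mu^{-\frac{N-2}{2}}S^{\frac N2}$ for all large $n$. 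In both cases this, combined with $c_M\le\sup_{t\in[0,1]}I_\nu(\gamma_0(t))\le I_\nu(u_0+t_nU_n)$ already observed after Lemma \ref{lm5.2}, yields the claim.

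The step I expect to be the main obstacle is the order bookkeeping for $N\ge6$: the only helpful negative term has size $n^{-\beta_q}$ with $\beta_q<2$, yet $\beta_q\to2^-$ as $q\to2^+$, so its margin over the $\|U_n\|_2^2$-error of order $n^{-2}$ is only polynomial and degenerates as $q$ decreases; one must therefore pick the auxiliary exponent $\epsilon$ in the logarithmic estimate of Corollary \ref{c1} small enough (depending on $N$ and $q$) that $\|U_n\|_{2+\epsilon}^{2+\epsilon}$ never overtakes $n^{-\beta_q}$, and also make sure the crude bound on $f(2^*,u_0,t_nU_n)$ does not reintroduce a too-slow $\|U_n\|_{2^*-1}^{2^*-1}$-error, which is exactly why the threshold $N\le5$ versus $N\ge6$ appears. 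A minor nuisance to check is the borderline $q=\frac{N}{N-2}$ (which can occur only for $N=3,4$, hence inside the first case), where $\|U_n\|_q^q$ carries an extra $\ln n$; this is harmless since there the $L_1$-trick, not the $\nu$-term, does the work.
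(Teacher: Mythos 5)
Your proposal is correct and follows essentially the same route as the paper: the identical expansion via $f$, $f_1$, $g$ and Corollary \ref{c1}, with the same case split — for $3\le N\le 5$ the lower bound on $f(2^*,u_0,t_nU_n)$ supplies the dominant negative term of order $n^{-\frac{N-2}{2}}$, while for $N\ge 6$ the sign-favourable $\nu$-term of order $n^{-\beta_q}$ with $\beta_q<2$ does the work against the absolute-value bound on $f$ and a small enough exponent in the logarithmic estimate. Your bookkeeping of the exponent thresholds matches the paper's choices, so nothing further is needed.
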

	\begin{proof}For any  $\nu>0, N=3,4, 5,$ and $q\in(2,2^*)$,   we have that, for $n$ large enough,
		\begin{equation}\label{lme15.32}
		\begin{split}
		&I_\nu(u_0+t_nU_n)\\
		&=I_\nu(u_0)+\frac{1}{2}\int|\nabla U_n|^2 t_n^2-\frac{\mu}{2^*}\int(|u_0+t_nU_n|^{2^*}-u_0^{2^*}-2^*u_0^{2^*-1}t_nU_n)\\
		&-\frac{\nu}{q}\int(|u_0+t_nU_n|^{q}-u_0^{q}-qu_0^{q-1}t_nU_n)\\
		&-\frac{\lambda-\theta}{2}\int U_n^2t_n^2-\frac{\theta}{2}\int \Big[(u_0+t_nU_n)^2\ln(u_0+t_nU_n)^2-u_0^2\ln u_0^2-2u_0t_nU_n(\ln u_0^2+1) \Big]\\
		&=c_\eta  +\frac{1}{2}\int|\nabla U_n|^2 t_n^2-\frac{\mu}{2^*}\int|t_nU_n|^{2^*}-\frac{\mu}{2^*}\int f(2^*, u_0, t_n U_n)\\
		&-\frac{\nu}{q}\int f_1(q, u_0, t_nU_n)-\frac{\lambda-\theta}{2}\int U_n^2t_n^2-\frac{\theta}{2}\int g(u_0, t_nU_n)\\
		&\leq \Big[c_\eta  +\frac{1}{2}S^{\frac{N}{2}} t_n^2-\frac{\mu}{2^*}S^{\frac{N}{2}}t_n^{2^*}+O\Big(\frac{1}{n^{N-2}}\Big)-\frac{L_1\mu}{2}\int |t_nU_n|^{2^*-1}\\
		&-C\nu\int |t_nU_n|^q+C\int |t_nU_n|^{2}-\frac{\theta}{2}\int(t_nU_n)^{2+\epsilon_1}\Big]\\
		&\leq \Big[c_\eta  +\frac{1}{N}\mu^{-\frac{N-2}{2}}S^{\frac{N}{2}} +O\Big(\frac{1}{n^{N-2}}\Big)-C\frac{1}{n^{\frac{N-2}{2}}}+O \Big(\frac{1}{n^{\min\{\frac{(N-2)(2+\epsilon_1)}{2}, 2-\frac{(N-2)\epsilon_1}{2}\}}} \Big)+C|U_n|_2^2\Big]\\
		&<c_\eta  +\frac{1}{N}\mu^{-\frac{N-2}{2}}S^{\frac{N}{2}},\\
		\end{split}
		\end{equation}
		where $\epsilon_1\in (0,1)$ with $\min\{\frac{(N-2)(2+\epsilon_1)}{2}, 2-\frac{(N-2)\epsilon_1}{2}\}>\frac{N-2}{2}$
		and we have used the Corollary \ref{c1} and the fact that
		$$
		\left\|U_n\right\|_2^2= \begin{cases}O\left(\frac{1}{n}\right), & \text { if } N=3 ; \\ O\left(\frac{\ln n}{n^2}\right), & \text { if } N=4; \quad \text { as } n \rightarrow \infty . \\ O\left(\frac{1}{n^2}\right), & \text { if } N \geq 5 .\end{cases}
		$$
		Similar to \eqref{lme15.32}, when   $\nu>0, N\geq 6,$ and $q\in(2,2^*)$,   we obtain  that, for $n$ large enough,
		\begin{align*}
		&I_\nu(u_0+t_nU_n)\\
		&=c_\eta  +\frac{1}{2}\int|\nabla U_n|^2 t_n^2-\frac{\mu}{2^*}\int|t_nU_n|^{2^*}-\frac{\mu}{2^*}\int f(2^*, u_0, t_n U_n)\\
		&-\frac{\nu}{q}\int f_1(q, u_0, t_nU_n)-\frac{\lambda-\theta}{2}\int U_n^2t_n^2-\frac{\theta}{2}\int g(u_0, t_nU_n)\\
		&\leq c_\eta  +\frac{1}{2}S^{\frac{N}{2}} t_n^2-\frac{\mu}{2^*}S^{\frac{N}{2}}t_n^{2^*}+O\Big(\frac{1}{n^{N-2}}\Big)+C\int |t_nU_n|^{2^*-1}
		-C\nu\int |t_nU_n|^q+C\int |t_nU_n|^{2}\\
		&-\frac{\theta}{2}\int(t_nU_n)^{2+\epsilon_1}\\
		&\leq c_\eta  +\frac{1}{N}\mu^{-\frac{N-2}{2}}S^{\frac{N}{2}} +O\Big(\frac{1}{n^{N-2}}\Big)+C\frac{1}{n^{\frac{N-2}{2}}}-C\nu\frac{1}{n^{\min\{\frac{(N-2)q}{2}, N-\frac{(N-2)q}{2}\}}}\\
		&+O\Big(\frac{1}{n^{\min\{\frac{(N-2)(2+\epsilon_1)}{2}, 2-\frac{(N-2)\epsilon_1}{2}\}}}\Big)
		+C|U_n|_2^2\\
		&<c_\eta  +\frac{1}{N}\mu^{-\frac{N-2}{2}}S^{\frac{N}{2}},
		\end{align*}
		where $\epsilon_1\in (0,1)$ with $\frac{N-2}{2}\geq 2>\min\{\frac{(N-2)(2+\epsilon_1)}{2}, 2-\frac{(N-2)\epsilon_1}{2}\}>\min\{\frac{(N-2)q}{2}, N-\frac{(N-2)q}{2}\}$
		and we have used the Corollary \ref{c1} and the fact that $\left\|U_n\right\|_2^2=O\big(\frac{1}{n^2}\big).$
		This complete the proof.
		
	\end{proof}

	\section{The proof of Theorem \ref{th4}}
	
	\begin{proof}[\textbf{The proof of Theorem \ref{th4}:}]
		It follows from the Mountain pass Theorem that there exists a sequence $\{u_n\} \subset H^1_0(\Omega)$ such that
		$I_\nu(u_n)\to c_M$ and $I^\prime_\nu(u_n)\to0$. By Lemma \ref{Lemma2}, we can see that  $\{u_n\} $ is bounded in $H^1_0(\Omega)$.
		So there exists $u\in H^1_{0}(\Omega)$ such that
		$$
		\begin{aligned}
		&u_n \rightharpoonup u \text { weakly in } H_0^1(\Omega),\\
		& u_n \rightharpoonup u \text { weakly in } L^{2^*}(\Omega), \\
		& u_n \rightarrow u \text { strongly in } L^m(\Omega) \text { for } 1\leq m<2^*, \\
		& u_n \rightarrow u \text { almost everywhere in } \Omega .
		\end{aligned}
		$$
		Since $<I_\nu^{'}(u_{n}),\varphi>\to0$ as $n\to\infty$ for any $\varphi\in C^{\infty}_{0}(\Omega)$, $u$ is a weak solution of
		\begin{gather*}
		-\Delta u=\mu\left|u_{+}\right|^{2^\ast-2}u_{+}+\nu\left|u_{+}\right|^{q-2}u_{+}+\lambda u_++\theta u_{+}\log u^2_{+},
		\end{gather*}
		which implies that

		\begin{equation}\label{2.5}
		\begin{array}{ll}
		I_\nu(u)\geq c_\eta.
		\end{array}
		\end{equation}
		Following from the definition of $(PS)_{c_M}$ sequence, we have
		$$
		\int\left|\nabla u_{n}\right|^2-\mu\int\left|(u_{n})_{+}\right|^{2^\ast}-\nu\int\left|(u_{n})_{+}\right|^{q}-\lambda\int (u_{n})^2_{+}-\theta\int (u_{n})^2_{+}\log (u_{n})^2_{+}=o_{n}(1)$$
		and
		$$		\frac{1}{2}\int\left|\nabla u_{n}\right|^2-\frac{\mu}{2^\ast}\int\left|(u_{n})_{+}\right|^{2^\ast}-\frac{\nu}{q}\int\left|(u_{n})_{+}\right|^{q}-\frac{\lambda}{2}\int (u_{n})^2_{+}-\frac{\theta}{2}\int (u_{n})^2_{+}(\log (u_{n})^2_{+}-1)=c_M+o_{n}(1).
		$$
		Set   $v_{n}=u_{n}-u$. Then
		$$
		\int\left|\nabla v_{n}\right|^2-\mu\int\left|(v_{n})_{+}\right|^{2^\ast}=o_{n}(1)$$
		and
		\begin{equation}\label{e4.11}
		I_\nu(u)+\frac{1}{2}\int\left|\nabla v_{n}\right|^2-\frac{\mu}{2^\ast}\int\left|(v_{n})_{+}\right|^{2^\ast}=c_M+o_{n}(1).
		\end{equation}
		Let
		$$
		\ds\int\left|\nabla v_{n}\right|^2\to k, ~ \hbox{as} ~ n\to \infty.
		$$
		So
		$$
		\int\left|(v_{n})_{+}\right|^{2^\ast}\to \frac{k}{\mu}, ~ \hbox{as} ~  n\to\infty.
		$$
		
		By the definition of $S$, we have
		$$      		\left|\nabla u\right|^2_{2} \ge S\left|u\right|^2_{2^\ast},~~\forall u\in H^1_{0}(\Omega)$$
		and
		$$k+o_{n}(1)=\int\left|\nabla v_{n}\right|^2 \ge S\big(\int\left|(v_{n})_{+}\right|^{2^\ast}\big)^{\frac{2}{2^\ast}}=S\mu ^{-\frac{N-2}{N}}k^\frac{N-2}{N}+o_{n}(1).
		$$
		If $k> 0$, then $k\ge \mu ^{-\frac{N-2}{2}}S^\frac{N}{2}$.
		By  $\eqref{2.5}$, we have
		\begin{equation*}
		\begin{split}
		c_M&= \lim\limits_{n\to +\infty}\Big[I_\nu(u)+\frac{1}{2}\int\left|\nabla v_{n}\right|^2-\frac{\mu}{2^\ast}\int\left|(v_{n})_{+}\right|^{2^\ast}\Big]\\
		&\geq c_\eta +\lim\limits_{n\to +\infty}\frac{1}{N}\int\left|\nabla v_{n}\right|^2 \\
		&=c_\eta +\frac{1}{N}k\\
		&\geq c_\eta +\frac{1}{N}\mu ^{-\frac{N-2}{2}}S^\frac{N}{2}\\
		\end{split}
		\end{equation*}
		which contradicts to Lemmas \ref{lm5.3} and \ref{lm5.4}.
		Thus $k=0$.  That is,
		$$\begin{array}{ll}
		u_{n}\rightarrow u,~\hbox{ in} ~ H^1_{0}(\Omega),
		\end{array}$$
		which implies that $I_\nu(u)=c_M$ and $I^\prime_\nu(u)=0$.
		That is, $u$ is a Mountain pass solution of \eqref{1.1}.  Using a similar argument as used to  the proof of  Theorem 2 in \cite{Deng}, we can get that $u>0$ and $u \in C^2(\Omega)$.
		We complete the proof.
	\end{proof}

	
	\bibliographystyle{plainnat}
	
\end{document}